\documentclass[pdflatex,sn-mathphys-num]{sn-jnl}

\usepackage{graphicx}%
\usepackage{multirow}%
\usepackage{amsmath,amssymb,amsfonts}%
\usepackage{amsthm}%
\usepackage{mathrsfs}%
\usepackage[title]{appendix}%
\usepackage{xcolor}%
\usepackage{textcomp}%
\usepackage{manyfoot}%
\usepackage{booktabs}%
\usepackage{algorithm}%
\usepackage{algorithmicx}%
\usepackage{algpseudocode}%
\usepackage{listings}%
\usepackage{natbib}
\usepackage{mathtools}
\usepackage[shortlabels]{enumitem}
\setlist{font=\normalfont}

\usepackage{tikz}
\usetikzlibrary{positioning}
\usetikzlibrary{cd}
\usetikzlibrary{arrows}
\usetikzlibrary{arrows.meta,
                decorations.markings}
\usetikzlibrary{shapes}

\DeclareMathOperator{\ord}{ord}
\DeclareMathOperator{\lcm}{lcm}
\DeclareMathOperator{\wval}{val}
\DeclareMathOperator{\fraction}{q}
\DeclareMathOperator{\denom}{h}
\DeclareMathOperator{\numer}{a}
\DeclareMathOperator{\rem}{r}
\DeclareMathOperator{\cl}{cl}
\DeclareMathOperator{\clz}{cl_0}

\newcommand{\WordsetD}[1]{{W}_{#1}}
\newcommand{\rQuoth}[2]{{Q}({#1},{#2})}
\newcommand{\rWordh}[2]{{W}({#1},{#2})}
\newcommand{\latn}{L_n}
\newcommand{\Z}{\mathbb{Z}}
\newcommand{\N}{\mathbb{N}}
\newcommand{\R}{\mathbb{R}}
\newcommand{\Q}{\mathbb{Q}}
\newcommand{\abs}[1]{\left\vert #1 \right\vert}

\theoremstyle{thmstyleone}%
\newtheorem{theorem}{Theorem}[section]
\newtheorem{proposition}[theorem]{Proposition}%
\newtheorem{corollary}[theorem]{Corollary}%
\newtheorem{fact}[theorem]{Fact}
\newtheorem{lemma}[theorem]{Lemma}

\theoremstyle{thmstyletwo}%
\newtheorem{example}[theorem]{Example}%
\newtheorem{remark}[theorem]{Remark}%

\theoremstyle{thmstylethree}%
\newtheorem{definition}[theorem]{Definition}%

\numberwithin{equation}{section}

\begin{document}

\title[Words don't come easy: strong affine representations of the polycyclic monoids]{Words don't come easy: strong affine representations of the polycyclic monoids}


\author*[1]{\fnm{Kristóf} \sur{Varga}}\email{vargak@server.math.u-szeged.hu}

\author[1]{\fnm{Tamás} \sur{Waldhauser}}\email{twaldha@math.u-szeged.hu}


\affil[1]{\orgdiv{Bolyai Institute}, \orgname{University of Szeged}, \orgaddress{\street{Aradi vértanúk tere 1.}, \city{Szeged}, \postcode{6720}, 
\country{Hungary}}}




\abstract{Each one-dimensional strong affine representation of the polycyclic monoid $\mathcal{P}_n$ is induced by a complete system of residues modulo $n$.
We completely characterize these representations in the case when the system of residues is an arithmetic sequence.
We accomplish this by introducing a closure operator on the set of primitive words over $\{0,1,\dots,n-1\}$, and we also describe the lattice of closed sets. 
This characterization covers all one-dimensional strong affine representations of $\mathcal{P}_2$.
}

\keywords{polycyclic monoids, affine representations, words, branching function systems, discrete dynamical systems, periodic points, radix representations, fractions, closure operators}



\maketitle

\section{Introduction}\label{sec1}



In this paper we characterize certain kinds of representations of the polycyclic monoids.
The \emph{polycyclic monoid} $\mathcal{P}_{n}$ is a monoid with zero given by the presentation
\[
\mathcal{P}_{n}=\langle a_{0},\ldots,a_{n-1},a_{0}^{-1},\ldots,a_{n-1}^{-1}%
:a_{i}^{-1}a_{i}=1\text{ and }a_{i}^{-1}a_{j}=0 \text{ if } i\neq j\rangle.
\]
These monoids were defined by Nivat and Perrot \cite{NP} as generalizations of the bicyclic monoid (actually, $\mathcal{P}_{1}$ is the bicyclic monoid extended by a zero element).
The polycyclic monoids are sometimes also referred to as Cuntz inverse semigroups, since they were also discovered by Cuntz in the context of $C^*$ algebras \cite{C}.

A \emph{representation} of $\mathcal{P}_n$ is defined as a monoid homomorphism from $\mathcal{P}_n$ to the monoid $\mathcal{I}_X$ of all partial bijections on a set $X$. 
In this paper we study so-called \emph{strong affine representations}. 
These are given by $X=\mathbb{Z}^{\nu}$ and $f_{i}\left(
\mathbf{x}\right)  =N\mathbf{x}+\mathbf{d}_{i}$, where $N$ is a nonsingular
$\nu\times\nu$ matrix over $\mathbb{Z}$ with $\abs{\det N}=n$, and $\mathbf{d}_{0},\dots
,\mathbf{d}_{n-1}$ is a complete system of representatives of the cosets corresponding to
the subgroup $N\mathbb{Z}^{\nu}$ of $\mathbb{Z}^{\nu}$.
The representation is then the unique monoid homomorphism $\mathcal{P}_n \to \mathcal{I}_{\Z^{\nu}}$ mapping $a_i$ to $f_i$ and $a_i^{-1}$ to $f_i^{-1}$.

In the one-dimensional case, previous studies considered the case $\abs{\det N}>0$, i.e., representations of $\mathcal{P}_{n}$ for $n \geq 2$ that assign to each generator $a_i$ an affine map $f_i\colon \Z \to \Z,~x \mapsto nx+d_i$, where $D=\{d_0,\dots,d_{n-1}\}$ is a complete system of residues modulo $n$.
Such a representation can be visualized by an edge-labeled directed graph, where an edge with label $i$ is drawn from $x$ to $f_i(x)$ for each $x \in \Z$ and $i \in \{ 0,\dots,n-1 \}$.
Two such representations are equivalent if and only if the corresponding graphs are isomorphic (here the isomorphism is required to preserve the edge labels).

The graph of the representation has finitely many connected components and every component contains a cycle; the vertices of these cycles are called periodic points.
Bratteli and Jørgensen \cite{BJ} investigated the cycles of these representations (see Section~\ref{sec:prelim} for the precise definitions), and provided some concrete examples of such cycle structures.

Consider a cycle $C$ of length $\ell$, and let us read the labels of the edges of $C$ starting from a vertex $x$, following the arrows backward. This way we obtain a primitive word $w$ of length $\ell$ over the alphabet $\{0,1,\dots,n-1\}$. 
We will say that $w$ is the word associated with the periodic point $x$.
If we start reading the labels from another vertex of $C$, then we obtain a cyclic shift of $w$, hence the word corresponding to a cycle is determined only up to cyclic shifts. 
Therefore, every one-dimensional strong affine representation of $\mathcal{P}_n$ can be given by such a finite set of words \cite{HW}.

Jones and Lawson \cite{JL} initiated the systematic study of these sets of words, and for $n=2$ they determined the words of a representation in terms of binary expansions of certain fractions that can be computed from $D$.
The results of \cite{HW} indicate that for $n>2$ the words (or even the cycles) corresponding to a representation depend on $D$ in a very complicated way, thus it does not seem feasible to describe these words in full generality. Therefore, we restrict our attention to the case when $D$ is an arithmetic sequence.
In this case $n$-ary expansions of fractions can be used, just like for $n=2$ \cite{HW} (note that any two numbers form an arithmetic sequence).
It was also shown in \cite{HW} that any finite set of primitive words can be extended to the set of words of a representation of this ``arithmetic" type.

We prove that there is always a least one among such extensions, thus we obtain a closure operator on the set of primitive words over $\{0,1,\dots,n-1\}$.
Our main result is a concrete description of this closure operator.
We determine the lattice of closed sets and we also provide a method to compute the closure of a set of words.
Earlier investigations were restricted to positive values of $n$, but we also allow $n \leq -2$.
Therefore, throughout the paper we assume that $\abs{n} \geq 2$, and we aim at describing representations of $\mathcal{P}_{\abs{n}}$ in terms of words over the alphabet $\Sigma := \{0,1,\dots,\abs{n}-1\}$.

The structure of the paper is as follows.
In Section~\ref{sec:expansions} we recall some necessary facts about $n$-ary expansions of real and rational numbers that are well known for positive $n$, and we present the less known counterparts of these facts for negative values of $n$.
In Section~\ref{sec:one-dim_sarpm} we summarize results about the dynamical properties of one-dimensional strong affine representations and their relationship with $n$-ary expansions of certain fractions.
These have been proved for positive $n$ in \cite{HW}; some proofs are the same in the case of negative $n$, while some proofs need modifications.
The main results are contained in Section~\ref{sec:arithmetic}: using the connection between representations and fractions we prove that the sets of words corresponding to representations induced by arithmetic sequences form a closure system. 
We describe the join operation of the lattice of closed sets, and we show how this can be used to compute the closure of a set of words.
The closure operator becomes simpler if we restrict the arithmetic sequences to ones that start with zero (i.e., $d_0=0$); in Section~\ref{sec:d_0=0} we use this simplified form to define a quasiorder relation on the primitive words over $\Sigma$ such that the closed sets are exactly the principal ideals of this quasiorder.
Obviously, any two numbers form an arithmetic sequence; thus our results cover all one-dimensional strong affine representations of $\mathcal{P}_2$. 
Moreover, if $n = 2$, then one can assume without loss of generality that $d_0=0$, thus in this case one can use the quasiorder defined in Section~\ref{sec:d_0=0} instead of the closure operator. Finally, in Section~\ref{sec:conclusion} we prove a curious fact that relates Mersenne primes with representations of $\mathcal{P}_2$, and we formulate some open questions motivating further research in this area.

\section{Expansions of real numbers in positive and negative bases}\label{sec:expansions}

Let $n$ be an integer \emph{base} with $\abs{n} \geq 2$, and let $\Sigma := \{0,1,\dots,\abs{n}-1\}$ denote the set of \emph{digits}.
By an \emph{$n$-ary expansion} of a real number $z$, we mean a sum of the form $z = \sum_{i=-\infty}^m c_in^i$ with $c_i\in \Sigma$.
Using a ``decimal point'', we can write this $n$-ary expansion as $z = (c_{m}\ldots c_{0}.c_{-1}c_{-2}\ldots)_n$.
Note that we allow zero digits on the left (hence we can assume $m \geq 0$ without loss of generality and we do not distinguish between expansions differing only in leading zeros) and the base $n$ is allowed to be negative. 
If all but finitely many of the coefficients $c_i$ are zeros, then we speak about a \emph{finite $n$-ary expansion}.

The case $n \geq 2$ is well known, while negative bases were considered e.g., in \cite{DWM} in more generality with arbitrary ``digit sets'' in place of $\Sigma$.
In this section we recall some results about $n$-ary expansions that will be needed in the sequel.

\subsection{Expansions of real numbers}

First we consider the existence and (non)uniqueness of $n$-ary expansions of real numbers.

\begin{proposition}[\cite{DWM}]\label{p:radix representations of integers}
	Let $n$ be an integer base with $\abs{n} \geq 2$.
	\begin{enumerate}[(i)]
		\item If $n$ is positive, then every nonnegative integer has a finite $n$-ary expansion, and this finite expansion is unique.
		\item If $n$ is negative, then every integer has a finite $n$-ary expansion, and this finite expansion is unique.
	\end{enumerate}
\end{proposition}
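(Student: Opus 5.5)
Both parts follow from a single division-with-remainder procedure, with uniqueness coming from reducing modulo $\abs{n}$. For an integer $z$, write $z = n q + c$ with $c \in \Sigma$; since $\Sigma$ is a complete residue system modulo $\abs{n}$, the digit $c$ and the quotient $q = (z-c)/n$ are uniquely determined. Iterating this map $T(z) = (z-c)/n$ produces the digit sequence $c_0, c_1, \dots$. The expansion is finite exactly when the orbit of $z$ under $T$ reaches $0$, because $T(0)=0$ with digit $0$, so from then on only leading zeros are produced.

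\emph{Uniqueness} (both cases). Suppose $z = \sum_{i=0}^m c_i n^i = \sum_{i=0}^{m'} c_i' n^i$ are two finite expansions, padded with zeros to a common length. Reducing modulo $\abs{n}$ gives $c_0 \equiv c_0'$, and both digits lie in $\Sigma$, so $c_0 = c_0'$. Subtracting $c_0$ and dividing by $n$ gives two finite expansions of the same integer $(z-c_0)/n$ with fewer digits. Induction on the length then finishes the argument.

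\emph{Existence.} We show that the orbit of $z$ under $T$ reaches $0$.
\begin{itemize}
\item For $n \ge 2$ and $z \ge 0$: we have $0 \le T(z) = \lfloor z/n \rfloor$, and $T(z) < z$ whenever $z > 0$. The sequence is therefore strictly decreasing in the nonnegative integers until it hits $0$.
\item For $n \le -2$ and arbitrary $z \in \Z$: we have $\abs{T(z)} \le (\abs{z} + \abs{n} - 1)/\abs{n}$. If $\abs{z} \ge 2$, this is strictly less than $\abs{z}$, because $\abs{z} + \abs{n} - 1 < \abs{n}\,\abs{z}$ is equivalent to $(\abs{n}-1)(\abs{z}-1) > 0$. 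So after finitely many steps we reach $\abs{z} \le 1$, and the few remaining values are checked by hand. $T(0) = 0$. $T(1) = 0$, since $1 = n\cdot 0 + 1$. For $z = -1$ we have $-1 = n\cdot 1 + (\abs{n}-1)$, so $T(-1) = 1$ and then $T(1) = 0$. This gives $-1 = (1\,(\abs{n}-1))_n$.
\end{itemize}
Once the orbit reaches $0$, unwinding the relations $z_k = n z_{k+1} + c_k$ yields $z = \sum_{i} c_i n^i$ as a finite sum.

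The only point needing care is termination for negative $n$. Here the naive absolute-value bound fails to be strict exactly at $\abs{z}=1$, which is why the small cases $0$ and $\pm 1$ are verified directly.
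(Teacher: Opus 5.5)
Your proof is correct, and it takes a different route from the paper. The paper gives no proof of this proposition; it quotes the result from \cite{DWM}, where it is an instance of the general theory of \emph{basic} digit sets for arbitrary radices and digit sets. The same source also supplies the existence of real expansions used in Proposition~\ref{p:radix representations of reals}.

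You instead give a self-contained division-algorithm argument. Uniqueness comes from reducing modulo $\abs{n}$, peeling off the last digit and inducting; this handles both signs of $n$ at once. Existence comes from showing that the orbit of $z$ under $T(z)=(z-c)/n$ reaches $0$. For $n\le -2$ you use the strict bound $\abs{T(z)}<\abs{z}$ when $\abs{z}\ge 2$ and check $z\in\{-1,0,1\}$ by hand. Your approach is elementary and shows exactly why negative bases reach all integers. The citation is more general: for arbitrary digit sets termination can fail because of nonzero cycles, and a theory is needed to rule them out.

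Your $T$ is exactly the map $R$ of Section~\ref{sec:one-dim_sarpm} for $D=\Sigma$, which is an arithmetic sequence with $d_0=0$ and $h=1$. For $n\le -2$, Proposition~\ref{p:absorbing} gives $\mathcal{A}_n(\Sigma)=\left[\frac{1}{n-1},\frac{n}{n-1}\right]\cap\Z=\{0\}$. So your termination argument is a hands-on special case of the absorbing-set argument the paper uses later.

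One small point to add. In the paper's definition, a \emph{finite} $n$-ary expansion may have finitely many nonzero digits after the point. Uniqueness in that sense therefore also requires ruling out $z=\sum_{i=-k}^{m}c_in^i$ with $k\ge 1$ and $c_{-k}\neq 0$. Your method does this in one line: $n^kz=\sum_{j=0}^{m+k}c_{j-k}n^j$, and reducing modulo $\abs{n}$ gives $c_{-k}\equiv n^kz\equiv 0$, hence $c_{-k}=0$.
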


\begin{remark}\label{r:basic}
	As the proposition above shows, negative bases are ``better'' than positive ones, since we can represent all integers (without using a minus sign).
	Following the terminology of \cite{DWM}, this means that the standard digit set $\Sigma$ is \emph{basic} if $n \leq -2$.
\end{remark}

\begin{remark}
	Strong affine representations of the polycyclic monoids give rise to $n$-ary expansions of a somewhat different kind as well, where negative numbers can be also represented in a positive base by \emph{formally} evaluating divergent geometric series, such as $-1 = \sum_{i=0}^\infty (n-1) n^i$ \cite[Remark 2.10]{HW}.
    This will not be used in this paper, but it is worth mentioning.
\end{remark}

Allowing infinite expansions, some real numbers may have more than one $n$-ary expansion.
Again, the positive case is well known (but we recall it in the next proposition), while for negative bases the results of \cite{DWM} imply that a real number can have at most three $n$-ary expansions.
We prove that actually only at most two $n$-ary expansions can yield the same number, and we explicitly describe the numbers with two expansions.

\begin{proposition}\label{p:radix representations of reals}
	Let $n$ be an integer base with $\abs{n} \geq 2$ and let $d = \abs{n}-1$.
	\begin{enumerate}[(i)]
		\item If $n$ is positive, then every nonnegative real number has an $n$-ary expansion.
		A number $z \in \R^+$ has multiple $n$-ary expansions if and only if it can be written as $z = \frac{a}{n^k}$ for some $a\in\N$ and $k\in\N_0$. 
		These numbers have exactly two $n$-ary expansions; one of them is finite, i.e., ending with $0000\dots$, the other one is infinite, ending with $dddd\dots$.
		\item If $n$ is negative, then every real number has an $n$-ary expansion.
		A number $z \in \R$ has multiple $n$-ary expansions if and only if it can be written as
		\[
			z=\frac{a}{n^k}-\frac{1}{n^k(n-1)}
		\]
		for some $a\in\Z$ and $k\in\N$.
		These numbers have exactly two $n$-ary expansions, both of them are infinite, one of them ending with $d0d0d0\dots$, the other one ending with $0d0d0d\dots$.
	\end{enumerate} 
\end{proposition}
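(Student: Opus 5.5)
For part (i) I will only sketch the classical argument, and I will run part (ii) along the same lines with a different "fundamental interval". Let $I$ be the set of values of purely fractional expansions $(0.c_{-1}c_{-2}\ldots)_n$.

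For $n\ge 2$ we have $I=[0,1]$. For $n\le -2$ the digit $c_{-i}$ contributes with sign $(-1)^i$, so the maximum is attained exactly by $0d0d\ldots$ and the minimum exactly by $d0d0\ldots$. Summing the geometric series, and using $d=-n-1$ and $n^2-1=(n-1)(n+1)$, I expect
\[
I=\Bigl[-\tfrac{n}{n-1},\,-\tfrac{1}{n-1}\Bigr],
\]
which has length exactly $1$. The key observation is that each endpoint of $I$ has a \emph{unique} fractional expansion, because every digit must take its extremal value. I also need that $I$ is the full interval and not just a subset. For this I use the self-similarity $I=\bigcup_{c\in\Sigma}(c+I)/n$: these are $\abs{n}$ closed intervals of length $1/\abs{n}$ inside an interval of length $1$. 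I will check that they tile it, meeting only at endpoints, and then a nested-interval argument produces a digit sequence for every point of $I$.

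Existence for (ii) then goes as follows. Since $I$ is a closed interval of length $1$, every $z\in\R$ can be written as $z=A+y$ with $A\in\Z$ and $y\in I$. By Proposition~\ref{p:radix representations of integers}(ii), $A$ has a finite $n$-ary expansion. Placing its digits at positions $\ge 0$ and the expansion of $y$ at negative positions gives an expansion of $z$. Part (i) is the same argument with $A\ge 0$.

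For the non-uniqueness statement, take two distinct expansions $(c_i)$ and $(c'_i)$ of $z$, and let $j$ be the highest index at which they differ. Cancelling the common part, I get
\[
(c_j-c'_j)n^j=n^j(y'-y),
\]
where $y,y'\in I$ are the values of the two tails after position $j$, rescaled to be fractional. Here $c_j-c'_j$ is a nonzero integer, while $\abs{y'-y}\le 1$. Hence $c_j-c'_j=\pm1$, and $y,y'$ are the two endpoints of $I$. By the uniqueness at the endpoints, one tail is $d0d0\ldots$ and the other is $0d0d\ldots$ (for $n>0$: $000\ldots$ and $ddd\ldots$).

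This forces the shape of $z$. It is a finite expansion up to position $j$ plus $n^{j}$ times an endpoint of $I$. Writing $-\frac{1}{n-1}$ for the endpoint, $z$ has the form $\frac{a}{n^k}-\frac{1}{n^k(n-1)}$ with $k=-j\ge 1$ and some $a\in\Z$; using the other endpoint only shifts $a$ by $1$. Conversely, suppose $z$ has this form. Then $n^k z - a$ is the endpoint $-\frac{1}{n-1}$, which also equals $-1+\bigl(-\tfrac{n}{n-1}\bigr)$. Expanding the integers $a$ and $a-1$ via Proposition~\ref{p:radix representations of integers}(ii) and attaching the two endpoint tails gives two different expansions.

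The step I expect to be the main obstacle is ruling out a third expansion; the rest is bookkeeping. My plan is as follows. Any two of the expansions must agree above some position $j$, differ there by $\pm1$, and have rigid alternating tails below $j$. Suppose $c^{(1)},c^{(2)},c^{(3)}$ are three distinct expansions, and let $j$ be the divergence index of the pair $(1,2)$. Then the tail of $c^{(1)}$ below $j$ is alternating. If $c^{(3)}$ diverged from $c^{(1)}$ at some index $j'\neq j$, the tail of $c^{(1)}$ below $j'$ would have to be the \emph{other} alternating pattern, or the digits at $j'$ would have to differ. Comparing parity, or comparing the required values of $z n^{-j}$ modulo $\Z$, should give a contradiction. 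If instead $j'=j$, then $c^{(3)}$ agrees with $c^{(2)}$ above $j$ and has a fully determined tail, so it equals $c^{(2)}$.
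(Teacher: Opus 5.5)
Your argument is correct in its main line, but it follows a different route from the paper's. The paper takes existence for $n\le -2$ from \cite{DWM} and then works with the cylinder sets $I(a,k)$. For each fixed $k$ these sets tile $\R$, and consecutive tiles share one endpoint. So two distinct expansions of $z$ eventually place $z$ in two different tiles, and $z$ must be a common endpoint. Since a point lies in at most two tiles of a level, a third expansion is excluded at once.

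You instead build existence from Proposition~\ref{p:radix representations of integers}(ii) plus a fundamental interval of length~$1$. You then analyse non-uniqueness locally, at the highest differing digit, through $c_j-c'_j=y'-y$ with $\abs{y'-y}\le 1$. This is more self-contained and treats both signs of $n$ uniformly. Moreover, your characterization of the numbers with several expansions uses only that $I$ lies in an interval of length $1$ whose endpoints have rigid expansions; it does not need existence. The price is that ``at most two'' needs a separate step, whereas the tiling gives it for free.

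Some details need repair.

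First, the identity in your converse is wrong. The correct identity is $-\frac{1}{n-1}=1+\bigl(-\frac{n}{n-1}\bigr)$; for $n=-2$ this reads $\frac13=1-\frac23$. So the second expansion comes from $a+1$, not from $a-1$. The same identity is what makes the pieces $(c+J)/n$ abut in your tiling check. That check should be carried out for the closed interval $J=\bigl[-\frac{n}{n-1},-\frac{1}{n-1}\bigr]$, not for $I$, because that $I$ is an interval is exactly what you are proving there.

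Second, $k=-j\ge 1$ is not automatic. For example, $-\frac{1}{n-1}=(0.0d0d\ldots)_n=(1.d0d0\ldots)_n$ has $j=0$. Use the identity $\frac{a}{n^k}-\frac{1}{n^k(n-1)}=\frac{an-1}{n^{k+1}}-\frac{1}{n^{k+1}(n-1)}$ to raise $k$ until $k\ge 1$.

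Third, excluding a third expansion is easier than your case split suggests. Within each pair, the two tails below the divergence index are $d0d0\ldots$ and $0d0d\ldots$ (or $000\ldots$ and $ddd\ldots$ when $n>0$), starting at the same position. Since $d\ge 1$, they differ at every position. Now take a position below all three pairwise divergence indices. There the three digits would be pairwise distinct elements of $\{0,d\}$, which is impossible.
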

\begin{proof}
	The first statement is common knowledge, so we assume that $n \leq -2$.
	According to \cite[Theorem 10]{DWM}, the existence of an $n$-ary expansion follows from the \emph{basic} property mentioned in Remark~\ref{r:basic}.
	
	It remains to prove the claim on numbers with several $n$-ary expansions. 
	To this extent, let us consider the set of real numbers having an $n$-ary expansion of the form $(c_{m}\ldots c_{0}.c_{-1}c_{-2}\ldots c_{-k}\ldots)_n$, where the first $m+1+k$ digits are fixed as indicated, and the digits after $c_{-k}$ are arbitrary.
	Let $I(a,k)$ denote the set of such real numbers, where $a = (c_{m}\ldots c_{0}c_{-1}c_{-2}\ldots c_{-k})_n \in \Z$.
	
	Assume first that $k$ is even.
	Then the least and greatest elements of $I(a,k)$ are 
	\begin{align}
		\min I(a,k) &= (c_{m}\ldots c_{0}.c_{-1}c_{-2}\ldots c_{-k}d0d0d0\ldots)_n = \frac{a}{n^k} - \frac{n}{n^k(n-1)},\label{e:min I(a,k)}\\
		\max I(a,k) &= (c_{m}\ldots c_{0}.c_{-1}c_{-2}\ldots c_{-k}0d0d0d\ldots)_n = \frac{a}{n^k} - \frac{1}{n^k(n-1)}\label{e:max I(a,k)}.
	\end{align}
	Observe that $\max I(a,k) = \min I(a+1,k)$, thus for a fixed $k$, the sets $I(a,k)$ can only meet at their extrema.
	Since every real number has an $n$-ary expansion, we have $\bigcup_{a \in \Z}I(a,k) = \R$ for every $k\in \N_0$, and this implies that each set $I(a,k)$ is an interval (of length $1/\abs{n}^k$).
	
	If $k$ is odd, then a similar argument shows that $I(a,k)$ is an interval, but in this case \eqref{e:min I(a,k)} gives the right endpoint and \eqref{e:max I(a,k)} gives the left endpoint.
    Again, for a fixed $k$, the intervals $I(\,\cdot\,,k)$ constitute a tiling of the real line.
    
    Note also that for all integers $k$, the interval $I(a,k)$ is a union of $\abs{n}$ intervals of the form $I(\,\cdot\,,k+1)$, which, of course, also meet only at their endpoints.
	
	Now let $z$ be a real number with an $n$-ary expansion $z = (c_{m}\ldots c_{0}.c_{-1}c_{-2}\ldots)_n$.
	Then $z \in I(a_k,k)$ for each $k \in \N_0$, where $a_k = (c_{m}\ldots c_{0}c_{-1}c_{-2}\ldots c_{-k})_n$.
	These intervals form a descending chain $I(a_0,0) \supset I(a_1,1) \supset \cdots$, and the only element of their intersection is $z$, as the lengths of the intervals converge to zero.
	If  $z$ has another $n$-ary expansion $z = (c'_{m'}\ldots c'_{0}.c'_{-1}c'_{-2}\ldots)_n$, then we obtain a similar nested chain of intervals $I(a'_k,k)$, also containing $z$.
	Since the two expansions (and hence the two chains of intervals) are different, there is a $k$ such that $I(a_k,k) \neq I(a'_k,k)$.
	By the tiling properties discussed above, $z$ can belong to both of these intervals only if either $z = \max I(a_k,k) = \min I(a'_k,k)$ or $z = \min I(a_k,k) = \max I(a'_k,k)$.
	This, together with \eqref{e:min I(a,k)} and \eqref{e:max I(a,k)} proves that $z$ is indeed of the form stated in the second statement of the theorem.
	The above argument also shows that $z$ cannot have a third $n$-ary expansion.
\end{proof}

\begin{corollary}\label{c:unit interval}
	Let $n$ be an integer base with $\abs{n} \geq 2$.
	Let $I_n$ denote the set of real numbers having an $n$-ary expansion of the form $(0.c_{-1}c_{-2}\ldots)_n$ with arbitrary digits after the ``decimal point''.
	\begin{enumerate}[(i)]
		\item If $n$ is positive, then $I_n = [0,1]$.
		\item If $n$ is negative, then $I_n = \left[\frac{-n}{n-1},\frac{-1}{n-1}\right]$.
	\end{enumerate} 
\end{corollary}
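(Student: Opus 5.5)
The plan is to observe that $I_n$ is exactly the set $I(0,0)$ from the proof of Proposition~\ref{p:radix representations of reals}. Indeed, $I(a,k)$ was defined as the set of reals having an expansion whose first $m+1+k$ digits are prescribed and whose remaining digits are arbitrary. For $k=0$ and all integer digits $c_m,\dots,c_0$ equal to zero, the prescribed integer part is $a=0$, and this is precisely the set of numbers $(0.c_{-1}c_{-2}\ldots)_n$. Leading zeros are not distinguished, so the choice of $m$ does not matter.

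For part (i), with $n\ge 2$, this is standard. The smallest value is $(0.000\ldots)_n=0$ and the largest is $(0.ddd\ldots)_n=\sum_{i\ge1}(n-1)n^{-i}=1$. To see that every value in between is attained, take $z\in[0,1]$ and its expansion from part (i) of Proposition~\ref{p:radix representations of reals}. If $z<1$, the integer part of the expansion is $0$. If $z=1$, use the expansion $0.ddd\ldots$ instead. Conversely, any such sum lies between these two extremes.

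For part (ii), with $n\le -2$, we use the fact shown in the proof of Proposition~\ref{p:radix representations of reals} that $I(a,k)$ is an interval. Its endpoints are given by \eqref{e:min I(a,k)} and \eqref{e:max I(a,k)}, and for even $k$ (here $k=0$) the first formula gives the minimum and the second the maximum. Substituting $a=0$ and $k=0$ gives
\[
\min I_n=-\frac{n}{n-1},\qquad \max I_n=-\frac{1}{n-1},
\]
which matches the claimed interval. As a sanity check, directly $(0.d0d0\ldots)_n=d\sum_{j\ge0}n^{-1-2j}=\frac{(-n-1)n}{n^2-1}=\frac{-n}{n-1}$, which is indeed negative since $n^{-1}<0$.

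The only real obstacle is the interval (connectedness) property of $I(0,0)$. That property was derived in the earlier proof from the existence of expansions for every real number together with the tiling argument, so I will simply cite it rather than reprove it.
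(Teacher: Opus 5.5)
Your proposal is correct and follows the paper's own proof: identify $I_n$ with $I(0,0)$ and read the endpoints off \eqref{e:min I(a,k)} and \eqref{e:max I(a,k)} with $a=k=0$, relying on the interval property established in the proof of Proposition~\ref{p:radix representations of reals}. The paper simply calls the positive case well known, so your short argument for $n\geq 2$ and your direct evaluation of $(0.d0d0\ldots)_n$ are harmless extras.
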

\begin{proof}
	The case of a positive base is well known, so we assume that $n \leq -2$.
	Using the notation $I(a,k)$ introduced in the proof of Proposition~\ref{p:radix representations of reals}, we have $I_n=I(0,0)$, hence the endpoints of the interval $I_n$ can be read from \eqref{e:min I(a,k)} and \eqref{e:max I(a,k)}.
\end{proof}

\begin{example}\label{ex:endpoints}
	If $n$ is positive, then the number $1$ has the following two $n$-ary expansions:
	\[
		1=(1.0000\dots)_n=(0.dddd\dots)_n.   
	\]
	In the usual decimal system this reads as follows:
	\[
		1=(1.0000\dots)_{10}=(0.9999\dots)_{10}.
	\]
	If $n$ is negative, then one of the simplest examples for a number with two $n$-ary expansions is
	\[
		\frac{-1}{n-1}=(0.0d0d0d\ldots)_n=(1.d0d0d0\ldots)_n.
	\]
	For $n=-10$, this reads as follows:
	\[
		\frac{1}{11}=(0.090909\ldots)_{-10}=(1.909090\ldots)_{-10}.
	\]
	Likewise, the left endpoint of the interval $I_n$ has the following two $n$-ary expansions for $n \leq -2$:
	\[
		\frac{-n}{n-1}=(1d.0d0d0d\ldots)_n=(0.d0d0d0\ldots)_n.
	\]
\end{example}

\subsection{Expansions of rational numbers}

The following proposition summarizes what we need to know about periodic $n$-ary expansions (we regard finite expansions as periodic). These facts are well known for positive bases and can be proved in exactly the same way for negative bases. Note that, according to Proposition~\ref{p:radix representations of reals}, we must assume that $z \geq 0$ in the second and third items if $n$ is positive, while $z$ can be arbitrary if $n$ is negative.

\begin{proposition}\label{p:rationalperiodic}
	Let $n$ be an integer base with $\abs{n} \geq 2$.
	\begin{enumerate}[(i)]
		\item If a real number has a periodic $n$-ary expansion, then it is a rational number.
		\item Conversely, if $z$ is a rational number, then all $n$-ary expansions of $z$ are periodic.
		\item Moreover, if $z = a/h$, where $h$ is relatively prime to $n$, then all $n$-ary expansions of $z$ are \emph{purely} periodic, i.e., the periodic part begins right after the ``decimal point''.
	\end{enumerate} 
\end{proposition}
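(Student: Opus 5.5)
The plan is to mimic the classical proofs, using only the geometric series identity and the interval bookkeeping $I(a,k)$ from the proof of Proposition~\ref{p:radix representations of reals}.

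For (i), suppose $z=(c_m\ldots c_0.c_{-1}c_{-2}\ldots)_n$ is eventually periodic: from position $-k$ on, a block $b_1\ldots b_\ell$ repeats. Then $z$ equals the finite part $a/n^k$ (with $a\in\Z$) plus $n^{-k}\cdot B\sum_{j\ge1} n^{-j\ell}$, where $B=(b_1\ldots b_\ell)_n\in\Z$. Since $\abs{n^{-\ell}}<1$, the geometric series converges absolutely for either sign of $n$, and its sum is $B/(n^\ell-1)$. Hence $z$ is rational; finite expansions are the case $B=0$.

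For (ii) and (iii), I use the characterization of digits through the intervals $I(a,k)$. Let $z=(c_m\ldots c_0.c_{-1}\ldots)_n$ and define the tails $t_k=n^kz-a_k$, where $a_k=(c_m\ldots c_{-k})_n$. Each $t_k$ is the value of $(0.c_{-k-1}c_{-k-2}\ldots)_n$, so $t_k\in I_n$, which is a bounded interval by Corollary~\ref{c:unit interval}. The tails satisfy the recursion $t_{k+1}=n t_k - c_{-k-1}$.

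If $z=a/h$ with $h>0$, then $h t_k\in\Z$. Because $I_n$ is bounded, the $t_k$ take only finitely many values, so $t_i=t_j$ for some $i<j$. Then $t_i$ and $t_j$ are the same number, but they could a priori carry different expansions, since an expansion of a point of $I_n$ need not be unique. I handle this obstacle as follows.
\begin{itemize}
\item If $z$ has a unique expansion, then so does every tail, because a second expansion of $t_k$ would splice into a second expansion of $z$. In this case $t_i=t_j$ forces the digit strings after positions $-i$ and $-j$ to coincide, which gives eventual periodicity.
\item If $z$ has two expansions, Proposition~\ref{p:radix representations of reals} lists them explicitly: they end in $000\ldots$ and $ddd\ldots$, or in $d0d0\ldots$ and $0d0d\ldots$. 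All of these are periodic.
\end{itemize}

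For purity in (iii), assume $\gcd(h,n)=1$. The map $t\mapsto nt-c$ on the finite set $T=\{t_k\}\subset\frac1h\Z\cap I_n$ gives each tail as a function of its predecessor. Conversely, $t_k=(t_{k+1}+c_{-k-1})/n$, and $c_{-k-1}$ is determined modulo $n$ by $h t_{k+1}$. Indeed, $h t_{k+1}+hc_{-k-1}=n\,h t_k\equiv 0 \pmod n$, so $hc_{-k-1}\equiv -h t_{k+1}\pmod n$. Since $h$ is invertible modulo $n$ and $c_{-k-1}\in\Sigma$, this determines $c_{-k-1}$, and with it the predecessor $t_k$. So the tail sequence is also determined backward. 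A sequence in a finite set that is injectively determined in both directions and eventually periodic must be purely periodic: from $t_i=t_j$ with $i<j$, I go back step by step to $t_0=t_{j-i}$.

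Finally, the digits themselves must repeat, not just the tails. For unique expansions this is automatic. In the two-expansion case I check directly that the endpoint forms are purely periodic, for instance $(0.0d0d\ldots)_n$ and $(1.d0d0\ldots)_n$ from Example~\ref{ex:endpoints}. The step I expect to be the main obstacle is exactly this interaction between non-uniqueness of expansions and the tail argument, and I resolve it by the case split above.
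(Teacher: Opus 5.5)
The paper gives no proof of this proposition; it only says that the classical positive-base arguments carry over verbatim. Your parts (i) and (ii) are correct, including the splicing argument. Your tail recursion $t_{k+1}=nt_k-c_{-k-1}$ with $ht_k\in\Z$ is a faithful version of that classical argument. Your backward step is well suited to negative bases: determining $c_{-k-1}$ from $ht_{k+1}$ modulo $n$, using that $h$ is invertible modulo $n$, recovers the digit and $t_k$ \emph{exactly}. The classical remainder map $r\mapsto nr \bmod h$ only pins down $ht_k$ modulo $h$, which is ambiguous at the endpoints of $I_n$.

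There is one step that does not close as written: the two-expansion case of (iii). Checking that $(0.0d0d\ldots)_n$ and $(1.d0d0\ldots)_n$ are purely periodic is not enough. Proposition~\ref{p:radix representations of reals} also yields doubly expanded numbers $\frac{a}{n^k}-\frac{1}{n^k(n-1)}$ (or $\frac{a}{n^k}$ if $n>0$) with a genuine power of $n$ in the reduced denominator, and their expansions are only eventually periodic. You must show that $\gcd(h,n)=1$ rules these out.

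\emph{The computation.} For $n<0$, the number $zn^k(n-1)=a(n-1)-1$ is prime to $n-1$. So if the reduced denominator of $z$ is prime to $n$, then $n^k\mid a(n-1)-1$. Since $n^k\equiv 1 \pmod{n-1}$, this gives $z=b/(n-1)$ with $b\equiv-1\pmod{n-1}$, i.e.\ $z=c-\frac{1}{n-1}$ with $c\in\Z$. Its two expansions $c+(0.0d0d\ldots)_n$ and $(c+1)+(0.d0d0\ldots)_n$ are purely periodic. For $n>0$ the same reasoning forces $z\in\N$, and both of its expansions are purely periodic.

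\emph{A cleaner fix that avoids the case split.} Your backward step makes $t_k$ and $c_{-k-1}$ functions of $t_{k+1}$ alone. Suppose a value $\tau$ is taken by infinitely many tails $t_{k_1}=t_{k_2}=\cdots$, and set $p=k_2-k_1$. Then the $p$-fold backward map fixes $\tau$. Tracing back from some $t_{k_N}$ with $k_N\ge k+p$ gives $t_k=t_{k+p}$ for all $k\ge 0$, hence $c_{-k-1}=c_{-k-1-p}$, for every expansion. This proves (iii) without Proposition~\ref{p:radix representations of reals}. Part (ii) then follows by multiplying $z$ by a power of $n$ that clears the primes of $n$ from the denominator.

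A minor point: your claim that the forward map $t\mapsto nt-c$ is a function of $t$ fails when $t$ has two expansions of the form $(0.\ldots)_n$. You only use the backward direction, so this is harmless.
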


If $z = (c_{m}\ldots c_{0}.c_{-1}c_{-2}\ldots)_n$ is a purely periodic $n$-ary expansion, then there is a least positive integer $\ell$, such that $c_i=c_{i-\ell}$ for all $i\leq -1$.
Thus $c_{-1}c_{-2}\ldots c_{-\ell}$ is the shortest repeating string of digits, called the \emph{repetend} of the expansion.
We will regard this repetend as a word over $\Sigma$, and the minimality of $\ell$ implies that the repetend is always a so-called primitive word (i.e., it is not a power of a shorter word).

In the next remark we take a closer look at the rational numbers in $I_n$ that have purely periodic $n$-ary expansions. 

\begin{remark}\label{r:repetend}
Recall from Corollary~\ref{c:unit interval} that the set of real numbers having an $n$-ary expansion of the form $(0.c_{-1}c_{-2}\ldots)$ is $I_n = [0,1]$ if $n \geq 2$ and $I_n = \left[\frac{-n}{n-1},\frac{-1}{n-1}\right]$ if $n\leq-2$.
Note that the length of the interval is $1$ in both cases.
It follows from Proposition~\ref{p:radix representations of reals} that if $z \in \Q$ is an interior point of $I_n$ and the denominator of the reduced form of $z$ is relatively prime to $n$, then $z$ has only one $n$-ary expansion (cf. also Proposition~\ref{p:rationalperiodic}).
On the other hand, if $n$ is positive, then the right endpoint of $I_n$ has two $n$-ary expansions, while if $n$ is negative, then both endpoints of $I_n$ have two $n$-ary expansions.
By Example~\ref{ex:endpoints}, in all of these cases only one of the expansions is of the form $(0.c_{-1}c_{-2}\dots)_n$, and we shall always consider this expansion.
By this convention, we have chosen a unique repetend for each purely periodic rational number $z$ from $I_n$, thus we can speak about the repetend of $z$ instead of the repetend of its expansion.
\end{remark}

The next proposition shows that the length of the repetend of a purely periodic reduced fraction $\frac ah$ almost always coincides with the order of $n$ modulo $h$.

\begin{proposition}\label{p:ordhn is fraction rep length}
    Let $h\in\N$ such that $\gcd(h,n)=1$ and let $a\in\Z$ such that $\gcd(a,h)=1$ and $\frac{a}{h}\in I_n$.
    \begin{enumerate}[(i)]
        \item If $n\geq2$, then the length of the repetend of $\frac ah$ is $\ord_h(n)$.
        \item If $n\leq-2$ and $\frac{a}{h}\notin\{\frac{-n}{n-1},\frac{-1}{n-1}\}$, then the length of the repetend of $\frac ah$ is $\ord_h(n)$.
    \end{enumerate}
\end{proposition}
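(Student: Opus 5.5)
The plan is to translate purely periodic expansions into the relation between $z=\frac ah$ and $n^\ell z$, and then use the multiplicative order. Let $\ell$ be the length of the repetend $c_{-1}\ldots c_{-\ell}$ of $z=\frac ah=(0.c_{-1}c_{-2}\ldots)_n$, and put $w=(c_{-1}\ldots c_{-\ell})_n\in\Z$. Pure periodicity gives $n^\ell z=(w.c_{-1}c_{-2}\ldots)_n = w+z$, i.e.
\[
\frac ah\,(n^\ell-1)=w\in\Z .
\]
Since $\gcd(a,h)=1$, this yields $h\mid n^\ell-1$, so $\ord_h(n)\mid \ell$ and in particular $\ord_h(n)\le \ell$.

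For the reverse inequality, set $k=\ord_h(n)$. Then $n^kz-z=\frac{a(n^k-1)}{h}$ is an integer $m$, so $n^k z = m+z$. I will multiply the chosen expansion of $z$ by $n^k$, which shifts the point by $k$ places: $n^kz=(c_{-1}\ldots c_{-k}.c_{-k-1}c_{-k-2}\ldots)_n$. On the other hand, $m+z$ also has the expansion $(m'.c_{-1}c_{-2}\ldots)_n$, where $m'$ denotes the digits of the finite expansion of $m+(\text{integer part})$. We cannot add digitwise. Instead, use the interval picture from the proof of Proposition~\ref{p:radix representations of reals}: the number $n^kz - (c_{-1}\ldots c_{-k})_n$ has expansion $(0.c_{-k-1}\ldots)_n$ and so lies in $I_n$. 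It also equals $z+\text{integer}$, and $z\in I_n$. The interval $I_n$ has length $1$, so two points of $I_n$ differing by an integer either coincide or are the two endpoints of $I_n$.

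If $z$ is an interior point of $I_n$, then we get $(0.c_{-k-1}c_{-k-2}\ldots)_n=z$. The expansion of $z$ is unique: for an interior rational point with denominator prime to $n$, any second expansion would have to be of the endpoint forms in Proposition~\ref{p:radix representations of reals}, whose denominators involve powers of $n$ (for $n>0$) or $n^k(n-1)$ and are excluded except at the endpoints, as noted in Remark~\ref{r:repetend}. Comparing digits then gives $c_{i-k}=c_i$, so $\ell\le k$.

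The main obstacle is the endpoint cases, which I would handle separately.

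For $n\ge2$, the endpoints are $0=(0.000\ldots)_n$ and $1=(0.ddd\ldots)_n$, each with repetend of length $1$. Since $\gcd(a,h)=1$ forces $h=1$ here, $\ord_1(n)=1$ and the claim holds.

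For $n\le -2$, the excluded endpoints are exactly the points where the shifted tail could land on the other endpoint. Indeed, their repetends $d0$ and $0d$ have length $2$, whereas $h=1-n$ gives $\ord_h(n)=1$, since $n\equiv 1 \pmod{1-n}$. This explains why they are excluded in (ii). For all remaining $z$ the interior argument applies, completing the proof.
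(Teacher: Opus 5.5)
Your proof is correct and follows essentially the same route as the paper. The paper shows, for arbitrary $k$, that ``$c_i=c_{i+k}$ for all $i$'' is equivalent to $h\mid n^k-1$, using the same two ingredients you use: $\beta=(0.c_{k+1}\ldots)_n-(0.c_1\ldots)_n\in\{0,\pm1\}$ because both are points of the length-one interval $I_n$, and uniqueness of expansions at interior points (Remark~\ref{r:repetend}). Your two inequalities are this equivalence split into its two directions, and your explicit check of the endpoints $0,1$ for $n\geq 2$ spells out what the paper asserts in one line; the abandoned remark about adding $m$ digitwise can simply be deleted.
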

\begin{proof}
    Since $\gcd(a,h)=1$ and $\frac{a}{h}\in I_n$, the $n$-ary expansion $\frac{a}{h} = (0.c_1c_2\dots)_n$ is purely periodic, i.e., $c_i=c_{i+\ell}$ for all $i \in \N$, where $\ell$ is the length of the repetend $c_1\dots c_\ell$.
    We consider the following four statements for a positive integer $k$:
    \begin{enumerate}[(a)]
        \item\label{i:k-per} $\forall i \in \N\colon c_i=c_{i+k}$;
        \item\label{i:egesz} $n^{k}\frac ah-\frac ah \in \Z$;
        \item\label{i:oszthato} $h \mid n^k-1$;
        \item\label{i:rend} $\ord_h(n) \mid k$.
    \end{enumerate}
    It is clear that \ref{i:rend} and \ref{i:oszthato} are equivalent, and from $\gcd(a,h)=1$ it is also clear that \ref{i:oszthato} is equivalent to \ref{i:egesz}.
    We shall prove that the first statement is ``almost" equivalent to the others.
    Let us write out the difference in \ref{i:egesz}:    
    \begin{align*}
    n^{k}\frac ah-\frac ah&=(c_1 \dots c_k.c_{k+1}\dots)_n-(0.c_1c_2\dots)_n \\
    &=(c_1\dots c_k)_n+(0.c_{k+1}c_{k+2}\dots)_n-(0.c_1c_2\dots)_n.
    \end{align*}
    Since $(c_1\dots c_k)_n\in\Z$, we see that \ref{i:egesz} holds if and only if 
    \begin{equation*}
    \beta := (0.c_{k+1}c_{k+2}\dots)_n-(0.c_1c_2\dots)_n\in\Z.
    \end{equation*}
    Observe that $\beta$ is the difference of two members of $I_n$ and $I_n$ is an interval of length $1$; therefore, $\beta \in \{0,1,-1\}$.
    By Remark~\ref{r:repetend}, the case $\beta=0$ is equivalent to \ref{i:k-per}.
    The cases $\beta=\pm 1$ occur if and only if $n \leq -2$ and $\frac{a}{h}$ is one of the endpoints of $I_n$.

    We have proved that the four statements are equivalent except when $n\leq-2$ and $\frac{a}{h}\in\{\frac{-n}{n-1},\frac{-1}{n-1}\}$.
    The least $k$ satisfying \ref{i:k-per} is the length of the repetend of $\frac{a}{h}$ and the least $k$ satisfying \ref{i:rend} is $\ord_h(n)$, thus the proposition is proved.
\end{proof}

\section{Cycles and words of one-dimensional strong affine representations}\label{sec:one-dim_sarpm}

The goal of this section is to summarize some basic facts about one-dimensional strong affine representations of the polycyclic monoids. 
Many of these results have been proved for positive $n$ in \cite{HW}; we will prove them for negative $n$ when it is necessary to modify the original proof.

\subsection{Notation}\label{sec:prelim}

We use the terminology and notation of \cite{HW}, and we refer the reader to that paper for more details and for further references.

From now on $n$ denotes an integer with $\abs{n} \geq 2$ (note that $n$ can be negative), and we are considering representations of $\mathcal{P}_{\abs{n}}$.
These representations will be described in terms of words over the alphabet $\Sigma = \{0,1,\dots,\abs n-1\}$.
We will use the standard notation $\Sigma^\ell$ to denote the set of words of length $\ell$, and $\Sigma^+$ will be used for the set of finite nonempty words over $\Sigma$.
A word $w\in \Sigma^\ell$ is \emph{primitive} if for every $k\in\N$ and $v\in \Sigma^+$, the equality $v^k=w$ implies $k=1$ and $v=w$. We say that $w_1$ and $w_2$ are \emph{conjugate} if $w_1$ can be obtained from $w_2$ using cyclic shifts. It is clear that conjugacy is an equivalence relation on $\Sigma^+$.
We denote the set of primitive words in $\Sigma^+$ by $\mathcal{W}$.

Let $D=\{d_0,\dots,d_{\abs{n}-1}\}$ be a complete system of residues modulo $n$. We will always assume $d_0<d_1<\dots<d_{\abs{n}-1}$.
The partial bijections $f_i\colon \Z \to \Z,~x \mapsto nx+d_i$ provide a one-dimensional strong affine representation of the polycyclic monoid $\mathcal{P}_{\abs{n}}$.

\subsection{Cycles and words of representations}

The union of the maps $f_i^{-1}~(i=0,\dots,\abs{n}-1)$ gives a map $R$ on $\Z$ given by
$R(x) = (x-d_i)/n$, where $d_i$ is the unique member of $D$ satisfying $x \equiv d_i~\pmod{n}$.
We will see in Corollary~\ref{c:periodic} that the representation is uniquely determined by the cycles of the map $R$.
As the following fact shows, we can always assume that $0\leq d_0\leq\abs{n-1}-1$.
\begin{fact}[{\cite[Fact~2.2]{HW}}]\label{f:+(n-1)}
    The following sequences give rise to equivalent representations of $\mathcal{P}_{\abs{n}}$:
    \begin{enumerate}[a)]
        \item\label{item: Da} $d_0,\dots,d_{\abs{n}-1}$;
        \item\label{item: Db} $d_0+k(n-1),\dots,d_{\abs{n}-1}+k(n-1)$ for arbitrary $k\in\Z$.
    \end{enumerate}
\end{fact}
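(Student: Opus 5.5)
The natural approach is to conjugate by a translation of $\Z$. For $k\in\Z$ put $\tau(x)=x-k$ and $d_i'=d_i+k(n-1)$, and write $f_i'(x)=nx+d_i'$. We will show that $\tau$ intertwines the two families of maps, i.e.\ $\tau\circ f_i=f_i'\circ\tau$ for every $i$. Both sides are affine, so this is a one-line computation:
\[
\tau(f_i(x)) = nx+d_i-k
\]
and
\[
f_i'(\tau(x)) = n(x-k)+d_i+k(n-1) = nx+d_i-k .
\]
Since $\tau$ is a bijection of $\Z$, this gives $f_i'=\tau\circ f_i\circ\tau^{-1}$, and taking inverses gives $f_i'^{-1}=\tau\circ f_i^{-1}\circ\tau^{-1}$ as partial bijections (with domains corresponding under $\tau$).

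Before this, we check that $d_0',\dots,d_{\abs{n}-1}'$ is again a complete system of residues modulo $n$. This holds because $k(n-1)$ is a fixed shift, so the $d_i'$ remain pairwise incongruent modulo $n$. The ordering $d_0'<\dots<d_{\abs{n}-1}'$ is also preserved. The labelling $a_i\mapsto f_i'$ therefore matches the paper's convention, and the second sequence does define a strong affine representation of $\mathcal{P}_{\abs{n}}$.

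Next we conclude equivalence. Conjugation by $\tau$ is a monoid automorphism of $\mathcal{I}_{\Z}$. Composing the representation $\rho$ induced by $D$ with this automorphism gives a homomorphism sending $a_i\mapsto f_i'$ and $a_i^{-1}\mapsto f_i'^{-1}$. By the uniqueness in the definition of the representation, this homomorphism equals the representation $\rho'$ induced by $D'$. Thus $\rho$ and $\rho'$ are equivalent (conjugate via $\tau$). In terms of the labelled graphs, $\tau$ sends each edge $x\to f_i(x)$ with label $i$ to the edge $\tau(x)\to f_i'(\tau(x))$ with the same label, so it is a label-preserving graph isomorphism.

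There is no real obstacle here. The only point needing care is that the index $i$ keeps the same meaning under the shift, i.e.\ the ordering of the digits is preserved, which we have checked.
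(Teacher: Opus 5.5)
Your proof is correct: $\tau(x)=x-k$ satisfies $\tau\circ f_i=f_i'\circ\tau$, and the shift by $k(n-1)$ preserves the ordering of the $d_i$ and hence the labels. So $\tau$ gives a label-preserving isomorphism of the graphs, which is the notion of equivalence used in the paper. The paper does not reprove this fact but cites it from \cite{HW}, where only $n\geq 2$ is treated; your translation-conjugacy computation is the natural argument and never uses the sign of $n$, so it also covers the paper's use of the fact for $n\leq -2$.
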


\begin{proposition}\label{p:absorbing}
    Let $\abs{n} \geq 2$ and let $D = \{ d_0,\dots,d_{\abs{n}-1} \}$ be a complete system of residues modulo $n$ such that $d_0 < d_1 < \dots  < d_{\abs{n}-1}$. 
    \begin{enumerate}[(i)]
		\item\label{item:absorbing-positive} If $n$ is positive, then the following set is a finite positively invariant absorbing set for $R$:
		\[
			\mathcal{A}_n(D):=\left[\frac{-d_{n-1}}{n-1},\frac{-d_0}{n-1}\right] \cap \Z.
		\]
		\item\label{item:absorbing-negative} If $n$ is negative, then the following set is a finite positively invariant absorbing set for $R$:
		\[
			\mathcal{A}_n(D):=\left[\frac{-d_0n-d_{\abs{n}-1}}{n^2-1},\frac{-d_{\abs{n}-1}n-d_0}{n^2-1}\right] \cap \Z.
		\]
	\end{enumerate}
\end{proposition}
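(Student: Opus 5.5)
The idea is to control $R$ with the elementary inequality $\frac{x-d_{\abs{n}-1}}{n}\le R(x)\le\frac{x-d_0}{n}$ for $n>0$; for $n<0$ the inequalities are reversed. We then show that the interval in question is mapped into itself and that orbits starting outside it shrink towards it. Finiteness is immediate, since $\mathcal{A}_n(D)$ is the intersection of $\Z$ with a bounded interval.

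\emph{Case $n\ge 2$.} Write $\alpha=\frac{-d_{n-1}}{n-1}$ and $\beta=\frac{-d_0}{n-1}$. These are the fixed points of the real maps $g_{n-1}(x)=(x-d_{n-1})/n$ and $g_0(x)=(x-d_0)/n$. Every $g_i$ is increasing with slope $1/n<1$. For $x\in[\alpha,\beta]$ this gives
\[
R(x)\ge g_{n-1}(x)\ge g_{n-1}(\alpha)=\alpha,\qquad R(x)\le g_0(x)\le g_0(\beta)=\beta,
\]
so $\mathcal{A}_n(D)$ is positively invariant.

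For absorption, take $x>\beta$. Then
\[
R(x)-\beta\le g_0(x)-g_0(\beta)=\frac{x-\beta}{n},
\]
and symmetrically, for $x<\alpha$ we get $\alpha-R(x)\le(\alpha-x)/n$. So the distance to $[\alpha,\beta]$ at least divides by $n$ at each step, as long as the point stays on the same side. If the point jumps into the interval, invariance keeps it there. A jump from above $\beta$ to below $\alpha$ is impossible, because $R(x)\ge g_{n-1}(x)>g_{n-1}(\beta)>\alpha$ when $\beta\ge\alpha$, and the reverse jump is excluded the same way. The distance therefore becomes less than $1/n$ after finitely many steps. Since $R$ takes integer values, this does not immediately put the point inside the interval, so we use a discrete version instead. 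For an integer $x>\beta$ we have
\[
R(x)\le\beta+\frac{x-\beta}{n}<x,
\]
so the integers $x>\beta$ strictly decrease under $R$ and cannot stay above $\beta$ forever. Symmetrically, integers below $\alpha$ strictly increase. Hence every orbit eventually enters $\mathcal{A}_n(D)$.

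\emph{Case $n\le -2$.} Here each $g_i$ is decreasing, and $R$ alternates sides. The natural objects are the two-step maps $g_i\circ g_j$, which have slope $1/n^2$. The claimed endpoints are the fixed points of the extreme compositions:
\[
\alpha=\frac{-d_0n-d_{\abs n-1}}{n^2-1}\ \text{ is the fixed point of } g_{\abs n-1}\circ g_0,\qquad
\beta=\frac{-d_{\abs n-1}n-d_0}{n^2-1}\ \text{ is the fixed point of } g_0\circ g_{\abs n-1}.
\]
Equivalently, $g_0(\alpha)=\beta$ and $g_{\abs n-1}(\beta)=\alpha$. For $x\in[\alpha,\beta]$ we then have
\[
R(x)\le g_0(x)\le g_0(\alpha)=\beta,\qquad R(x)\ge g_{\abs n-1}(x)\ge g_{\abs n-1}(\beta)=\alpha,
\]
using that the $g_i$ are decreasing. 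This gives invariance.

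For absorption, let $x>\beta$. Then $R(x)\ge g_{\abs n-1}(x)$, so $R(x)-\alpha\ge -(x-\beta)/\abs n$. Likewise, for $x<\alpha$, $R(x)-\beta\le(\alpha-x)/\abs n$. Thus the distance to the interval contracts by the factor $1/\abs n$ each step, whichever side the point lands on. For integers, as before, the key inequality is $\abs{R(x)}<\abs{x}$ once $\abs x$ is large, so the orbit enters a bounded range. It then remains to show the orbit actually lands in the interval rather than merely near it, which I would get from the discrete strict-decrease argument applied to the two-step map.

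\textbf{Main obstacle.} The negative case is the delicate part: checking the algebra of these fixed points, and getting the absorption argument right for integer orbits that oscillate across the interval. I would handle the latter by working with $R^2$, which is monotone-controlled.
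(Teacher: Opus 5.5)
\textbf{Positive case.} Your treatment of $n\ge 2$ is correct and complete. It is in fact more self-contained than the paper, which simply quotes \cite[Lemma~2.3]{HW} for part (i).

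\textbf{Negative case, invariance.} This step is wrong as written. For $n\le -2$ we have $g_i(x)=\frac{x}{n}+\frac{d_i}{\abs{n}}$, so $g_0(x)\le R(x)\le g_{\abs{n}-1}(x)$, as your own opening sentence says. Yet in the invariance step you use $R(x)\le g_0(x)$ and $R(x)\ge g_{\abs{n}-1}(x)$.

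The fixed points are swapped as well. In fact $\alpha$ is the fixed point of $g_0\circ g_{\abs{n}-1}$ and $\beta$ is that of $g_{\abs{n}-1}\circ g_0$. So the true identities are $g_{\abs{n}-1}(\alpha)=\beta$ and $g_0(\beta)=\alpha$, not $g_0(\alpha)=\beta$ and $g_{\abs{n}-1}(\beta)=\alpha$. For example, with $n=-2$ and $D=\{0,1\}$ one has $\alpha=-\frac13$, $\beta=\frac23$, $g_0(\alpha)=\frac16\neq\beta$, and $R(1)=0>-\frac12=g_0(1)$.

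Interchanging $g_0$ and $g_{\abs{n}-1}$ throughout repairs this. For $x\in[\alpha,\beta]$ you then get
\[
\alpha=g_0(\beta)\le g_0(x)\le R(x)\le g_{\abs{n}-1}(x)\le g_{\abs{n}-1}(\alpha)=\beta,
\]
which is exactly the paper's invariance argument. Your two contraction estimates then also follow correctly, from $R(x)\ge g_0(x)$ for $x>\beta$ and from $R(x)\le g_{\abs{n}-1}(x)$ for $x<\alpha$.

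\textbf{Negative case, absorption.} This part is not actually carried out. To claim that the distance to $[\alpha,\beta]$ shrinks by $1/\abs{n}$ ``whichever side the point lands on'', you also need that $x>\beta$ forces $R(x)<\beta$. This follows from $R(x)\le g_{\abs{n}-1}(x)<g_{\abs{n}-1}(\alpha)=\beta$, and symmetrically $x<\alpha$ forces $R(x)>\alpha$. Moreover, the observation $\abs{R(x)}<\abs{x}$ for large $\abs{x}$ only confines orbits to a bounded set, and that set could a priori contain cycles outside $[\alpha,\beta]$. The decisive step is only announced, not proved.

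The paper supplies the idea you are missing. We have $R^2(x)=\bigl(x-(nd_i+d_j)\bigr)/n^2$ whenever $x\equiv nd_i+d_j \pmod{n^2}$, and $D'=\{nd_i+d_j\}$ is a complete residue system modulo $n^2$. So $R^2$ is the map $R$ of a representation of $\mathcal{P}_{n^2}$ with positive base. Part (i) therefore makes $\mathcal{A}_{n^2}(D')$ absorbing for $R^2$, hence for $R$. Since $\min D'=nd_{\abs{n}-1}+d_0$ and $\max D'=nd_0+d_{\abs{n}-1}$, this set is precisely the one in (ii).

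Alternatively, your corrected contraction estimate already finishes the proof by discreteness. Since $\alpha,\beta\in\frac{1}{n^2-1}\Z$, every integer outside $[\alpha,\beta]$ lies at distance at least $\frac{1}{n^2-1}$ from it. A distance that decreases geometrically must therefore become $0$ after finitely many steps.
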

\begin{proof}
	For positive $n$, the result was proved in \cite[Lemma~2.3]{HW}, so we prove only \ref{item:absorbing-negative}.
	Assume thus that $n \leq -2$, and observe that this implies    
	\[y\leq x\leq z \implies \frac{z-d_0}n\leq\frac{x-d_0}n\leq R(x)\leq\frac{x-d_{\abs n -1}}{n}\leq\frac{y-d_{\abs n -1}}{n}\]
	for all $x,y,z \in \Z$.
    Letting $y$ and $z$ be the left and right endpoints of the interval in \ref{item:absorbing-negative}, we obtain the following inequalities for all $x\in \mathcal{A}_n(D)$
	\[\frac{-d_0n^2-d_{\abs{n}-1}n}{(n^2-1)n}=
	\frac{\frac{-d_{\abs{n}-1}n-d_0}{n^2-1}-d_0}{n}\leq
	R(x)\leq
	\frac{\frac{-d_0n-d_{\abs{n}-1}}{n^2-1}-d_{\abs n -1}}{n}=\frac{-d_{\abs{n}-1}n^2-d_0n}{(n^2-1)n}.
	\]
	Thus $R(x)\in \mathcal{A}_n(D)$, which means that $\mathcal{A}_n(D)$ is positively invariant.
	
	Now let us consider the set $D':=\{ nd_i+d_j : 0 \leq i,j \leq \abs{n}-1 \}$.
	This is a complete system of residues modulo $n^2$, hence the maps $x \mapsto n^2x+nd_i+d_j$ determine a representation of $\mathcal{P}_{n^2}$.
	The key observation is that the union of the inverses of these $n^2$ maps is nothing else but $R^2$.
	Since $n^2$ is positive, \ref{item:absorbing-positive} shows that $\mathcal{A}_{n^2}(D')$ is an absorbing set for $R^2$, thus it is also an absorbing set for $R$ (as the trajectory of $x$ under $R$ is the union of the trajectories of $x$ and $R(x)$ under $R^2$).
	To determine the set $\mathcal{A}_{n^2}(D')$, we need to find the least and the largest element of the set $D'$.
	Taking into account that $n$ is negative and the numbers $d_i$ are nonnegative, we have $\min D' = nd_{\abs{n}-1}+d_0$ and $\max D' = nd_0+d_{\abs{n}-1}$; therefore, \ref{item:absorbing-positive} gives
	\[
		\mathcal{A}_{n^2}(D') = \left[\frac{-\max D'}{n^2-1},\frac{-\min D'}{n^2-1}\right] \cap \Z
		= \left[\frac{-d_0n-d_{\abs{n}-1}}{n^2-1},\frac{-d_{\abs{n}-1}n-d_0}{n^2-1}\right] \cap \Z.
	\]
	This completes the proof, since this set is exactly the same as the one claimed in \ref{item:absorbing-negative}.
\end{proof}

The properties of one-dimensional strong affine representations of the polycyclic monoids summarized in the next corollary were proved for positive $n$ in \cite{HW}.
Having established the existence of a finite positively invariant absorbing set $\mathcal{A}_n(D)$ in Proposition~\ref{p:absorbing}, these results follow by the same arguments.

\begin{corollary}\label{c:periodic}
	Let $\abs{n} \geq 2$ and let $D = \{ d_0,\dots,d_{\abs{n}-1} \}$ be a complete system of residues modulo $n$ such that $d_0 < d_1 < \dots  < d_{\abs{n}-1}$. 
	The graph corresponding to the representation of $\mathcal{P}_{\abs{n}}$ determined by $D$ has finitely many connected components, and each component contains exactly one cycle, which are all contained in $\mathcal{A}_n(D)$.
	The representation is determined up to equivalence by the words formed by the labels of the edges in the cycles.
	These words are all primitive, and words corresponding to different components are not cyclic shifts of each other.
\end{corollary}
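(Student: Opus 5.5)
We deduce Corollary~\ref{c:periodic} from the finiteness of the positively invariant absorbing set $\mathcal{A}_n(D)$ in Proposition~\ref{p:absorbing}, following the arguments of \cite{HW} for positive $n$.

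\textbf{Step 1: structure of the graph.} Each vertex $x$ has exactly one outgoing ``backward'' edge, to $R(x)$, with label $i$ where $x\equiv d_i \pmod n$. Equivalently, $x$ has exactly one incoming edge, from $R(x)$. So the graph is the functional graph of $R$ with edges reversed.

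\begin{itemize}
\item Since $\mathcal{A}_n(D)$ is absorbing, every trajectory $x,R(x),R^2(x),\dots$ eventually enters $\mathcal{A}_n(D)$. Since the set is positively invariant, the trajectory then stays there.
\item By finiteness, the trajectory eventually becomes periodic inside $\mathcal{A}_n(D)$. Hence every component contains a cycle, and every cycle lies in $\mathcal{A}_n(D)$.
\item A component of a functional graph contains at most one cycle. Indeed, the forward $R$-orbits of two vertices in the same (weakly) connected component eventually merge, and a cycle is closed under $R$.
\item Every component meets $\mathcal{A}_n(D)$, so there are finitely many components.
\end{itemize}

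\textbf{Step 2: the words determine the representation.} Reading labels backward along a cycle from $x$ gives a word $w=c_1\dots c_\ell$ with $x=f_{c_1}(f_{c_2}(\cdots f_{c_\ell}(x)))$, that is, $x=n^\ell x+(\text{const})$. Because $n^\ell\neq 1$, this equation has a unique solution. So the word determines $x$ and hence the whole cycle. The component is then recovered from the cycle. Each vertex $y$ of the cycle has $\abs{n}$ successors $f_i(y)$, one of which lies on the cycle. The others are roots of infinite $\abs{n}$-ary trees, because every non-periodic vertex has exactly $\abs{n}$ children $f_i(\cdot)$ and these children are never periodic. Thus each component is isomorphic, as a labelled graph, to a canonical graph built from its cycle word, and the multiset of cycle words (up to conjugacy) determines the labelled graph up to isomorphism.

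\textbf{Step 3: primitivity and distinctness.}
\begin{itemize}
\item Suppose a cycle word were $v^k$ with $k>1$. By uniqueness of the fixed point of the affine map $f_v$ (composition along $v$), the starting point $x$ would already satisfy $x=f_v(x)$. The cycle would then have length $\abs{v}<\ell$, a contradiction.
\item Suppose two different cycles had conjugate words. After shifting the starting points, they would have the same word, and the same uniqueness argument forces the same starting point, hence the same cycle.
\end{itemize}

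The main point needing care is the tree-structure claim in Step~2: non-periodic vertices have no periodic children, and the isomorphism respects labels. This is routine once injectivity of the $f_i$ and the uniqueness of preimages under $R$ are used.
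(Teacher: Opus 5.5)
Your proposal is correct and follows the same route as the paper. The paper simply notes that once Proposition~\ref{p:absorbing} supplies a finite positively invariant absorbing set $\mathcal{A}_n(D)$ for $R$ (now also for $n\le -2$), the arguments of \cite{HW} for positive $n$ carry over unchanged, and your three steps (functional-graph structure of $R$, the canonical cycle-plus-trees description of each component, and uniqueness of the fixed point of $x\mapsto n^\ell x+c$ for primitivity and non-conjugacy) are a faithful expansion of exactly those deferred arguments.
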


\begin{definition}\label{d:WD}
     Let $\abs{n} \geq 2$, and let us consider a one-dimensional strong affine representation of $\mathcal{P}_{\abs{n}}$ induced by a complete system of residues $D = \{ d_0,\dots,d_{\abs{n}-1} \}$ modulo $n$ such that  $d_0 < d_1 < \dots  < d_{\abs{n}-1}$.
     By Corollary~\ref{c:periodic}, the graph of the representation is determined up to isomorphism by a set of words $\WordsetD{D} \subseteq \mathcal{W}$ corresponding to the cycles of the graph.
     We say that $\WordsetD{D}$ is \emph{the set of words corresponding to the representation induced by $D$.}
     Let us recall that $\WordsetD{D}$ is a finite set of primitive words and it is closed under cyclic shifts.
\end{definition}

\begin{example}
    Figure~\ref{fig:example} shows a part of the graph of the representation of $\mathcal{P}_3$ corresponding to $n=3$ and $(d_0,d_1,d_2)=(1,6,14)$.
    The graph has $4$ connected components, containing cycles of lengths $3$, $1$, $2$
     and $1$. 
    The word associated with the periodic point $-5$ is $002$; similarly, the periodic points $-2$ and $-1$ are associated with the words $020$ and $200$, respectively. Observe that these words form a conjugacy class. We can read from the graph the words corresponding to the other cycles in a similar way, and then we obtain $\WordsetD{D} = \{ 002, 020, 200; 1; 12, 21; 2\}$.
    Although the figure shows only a part of the graph, the structure of the whole graph can be easily reconstructed using the fact that each vertex has $3$ outgoing edges, one for each label $0$, $1$ and $2$.
\end{example}
\begin{figure}[h]
\centering
\begin{tikzpicture}[scale=0.75,
        ->,>={Stealth[length=1mm, width=1mm]},shorten >=1pt,auto,node distance=0.3cm,
        node/.style={circle,inner sep=0.01cm,draw,thick}]
\node[node]                                 (A) at (-2,0)  {\footnotesize$-2$};
\node[node]                                 (B) at (-1,2)  {\footnotesize$-1$};
\node[node]                                 (C) at (0,0)   {\footnotesize$-5$};
\node[node]                                 (D) at (3,1)   {\footnotesize$-3$};
\node[node]                                 (E) at (6,0)   {\footnotesize$-6$};
\node[node]                                 (F) at (6,2)   {\footnotesize$-4$};
\node[node]                                 (G) at (8,1)   {\footnotesize$-7$};
\node[ellipse,inner sep=0.02cm,left=of A]          (d1)            {\scriptsize$0$} ;
\node[ellipse,inner sep=0.02cm,below= of A]        (d2)            {\scriptsize$8$};
\node[ellipse,inner sep=0.02cm,above left= of B]   (d3)            {\scriptsize$3$};
\node[ellipse,inner sep=0.02cm,above right= of B]  (d4)            {\scriptsize$11$};
\node[ellipse,inner sep=0.02cm,right=of C]         (d5)            {\scriptsize$-14$};
\node[ellipse,inner sep=0.02cm,below=of C]         (d6)            {\scriptsize$-9$};
\node[ellipse,inner sep=0.02cm,below=1cm of D]     (d7)            {\scriptsize$-8$} ;
\node[ellipse,inner sep=0.02cm,above=1cm of D]     (d8)            {\scriptsize$5$};
\node[ellipse,inner sep=0.02cm,above left= of F]   (d9)            {\scriptsize$-11$};
\node[ellipse,inner sep=0.02cm,above right= of F]  (d10)           {\scriptsize$2$};
\node[ellipse,inner sep=0.02cm,below left= of E]   (d11)           {\scriptsize$-17$};
\node[ellipse,inner sep=0.02cm,below right= of E]  (d12)           {\scriptsize$-12$};
\node[ellipse,inner sep=0.02cm,above right= of G]  (d13)           {\scriptsize$-20$};
\node[ellipse,inner sep=0.02cm,below right= of G]  (d14)           {\scriptsize$-15$} ;


%
\path   (A) edge[bend right] node[inner sep=0.02cm] {\tiny 0} (C)
        (C) edge[bend right] node[inner sep=0.02cm] {\tiny 2} (B)    
        (B) edge[bend right] node[inner sep=0.02cm] {\tiny 0} (A);
\path   (D) edge[loop left]  node[inner sep=0.05cm] {\tiny 1} (D);
\path   (E) edge[bend right] node[inner sep=0.02cm] {\tiny 2} (F)
        (F) edge[bend right] node[inner sep=0.02cm] {\tiny 1} (E);
\path   (G) edge[loop left]  node[inner sep=0.05cm] {\tiny 2} (G);
\draw (A)   -> node[inner sep=0.02cm] {\tiny 1} (d1);
\draw (A)   -> node[inner sep=0.02cm] {\tiny 2} (d2);
\draw (B)   -> node[inner sep=0.02cm] {\tiny 2} (d4);
\draw (B)   -> node[inner sep=0.02cm] {\tiny 1} (d3);
\draw (C)   -> node[inner sep=0.02cm] {\tiny 0} (d5);
\draw (C)   -> node[inner sep=0.02cm] {\tiny 1} (d6);
\draw (D)   -> node[inner sep=0.02cm] {\tiny 0} (d7);
\draw (D)   -> node[inner sep=0.02cm] {\tiny 2} (d8);
\draw (F)   -> node[inner sep=0.02cm] {\tiny 0} (d9);
\draw (F)   -> node[inner sep=0.02cm] {\tiny 2} (d10);
\draw (E)   -> node[inner sep=0.02cm] {\tiny 0} (d11);
\draw (E)   -> node[inner sep=0.02cm] {\tiny 1} (d12);
\draw (G)   -> node[inner sep=0.02cm] {\tiny 0} (d13);
\draw (G)   -> node[inner sep=0.02cm] {\tiny 1} (d14);
\draw (A)   -> node[inner sep=0.02cm] {\tiny 1} (d1);
\draw (A)   -> node[inner sep=0.02cm] {\tiny 2} (d2);
\draw (B)   -> node[inner sep=0.02cm] {\tiny 2} (d4);
\draw (B)   -> node[inner sep=0.02cm] {\tiny 1} (d3);
\draw (C)   -> node[inner sep=0.02cm] {\tiny 0} (d5);
\draw (C)   -> node[inner sep=0.02cm] {\tiny 1} (d6);
\draw (D)   -> node[inner sep=0.02cm] {\tiny 0} (d7);
\draw (D)   -> node[inner sep=0.02cm] {\tiny 2} (d8);
\draw (F)   -> node[inner sep=0.02cm] {\tiny 0} (d9);
\draw (F)   -> node[inner sep=0.02cm] {\tiny 2} (d10);
\draw (E)   -> node[inner sep=0.02cm] {\tiny 0} (d11);
\draw (E)   -> node[inner sep=0.02cm] {\tiny 1} (d12);
\draw (G)   -> node[inner sep=0.02cm] {\tiny 0} (d13);
\draw (G)   -> node[inner sep=0.02cm] {\tiny 1} (d14);
\draw (d1)  -> node[inner sep=0.02cm] {\tiny 0} +(120:0.75cm);
\draw (d1)  -> node[inner sep=0.02cm] {\tiny 1} +(180:0.75cm);
\draw (d1)  -> node[inner sep=0.02cm] {\tiny 2} +(240:0.75cm);
\draw (d2)  -> node[inner sep=0.02cm] {\tiny 0} +(210:0.75cm);
\draw (d2)  -> node[inner sep=0.02cm] {\tiny 1} +(270:0.75cm);
\draw (d2)  -> node[inner sep=0.02cm] {\tiny 2} +(330:0.75cm);
\draw (d3)  -> node[inner sep=0.02cm] {\tiny 0} +( 75:0.75cm);
\draw (d3)  -> node[inner sep=0.02cm] {\tiny 1} +(135:0.75cm);
\draw (d3)  -> node[inner sep=0.02cm] {\tiny 2} +(195:0.75cm);
\draw (d4)  -> node[inner sep=0.02cm] {\tiny 0} +(345:0.75cm);
\draw (d4)  -> node[inner sep=0.02cm] {\tiny 1} +( 45:0.75cm);
\draw (d4)  -> node[inner sep=0.02cm] {\tiny 2} +(105:0.75cm);
\draw (d5)  -> node[inner sep=0.02cm] {\tiny 0} +(300:0.75cm);
\draw (d5)  -> node[inner sep=0.02cm] {\tiny 1} +(  0:0.75cm);
\draw (d5)  -> node[inner sep=0.02cm] {\tiny 2} +( 60:0.75cm);
\draw (d6)  -> node[inner sep=0.02cm] {\tiny 0} +(210:0.75cm);
\draw (d6)  -> node[inner sep=0.02cm] {\tiny 1} +(270:0.75cm);
\draw (d6)  -> node[inner sep=0.02cm] {\tiny 2} +(330:0.75cm);
\draw (d7)  -> node[inner sep=0.02cm] {\tiny 0} +(210:0.75cm);
\draw (d7)  -> node[inner sep=0.02cm] {\tiny 1} +(270:0.75cm);
\draw (d7)  -> node[inner sep=0.02cm] {\tiny 2} +(330:0.75cm);
\draw (d8)  -> node[inner sep=0.02cm] {\tiny 0} +( 30:0.75cm);
\draw (d8)  -> node[inner sep=0.02cm] {\tiny 1} +( 90:0.75cm);
\draw (d8)  -> node[inner sep=0.02cm] {\tiny 2} +(150:0.75cm);
\draw (d9)  -> node[inner sep=0.02cm] {\tiny 0} +( 75:0.75cm);
\draw (d9)  -> node[inner sep=0.02cm] {\tiny 1} +(135:0.75cm);
\draw (d9)  -> node[inner sep=0.02cm] {\tiny 2} +(195:0.75cm);
\draw (d10) -> node[inner sep=0.02cm] {\tiny 0} +(345:0.75cm);
\draw (d10) -> node[inner sep=0.02cm] {\tiny 1} +( 45:0.75cm);
\draw (d10) -> node[inner sep=0.02cm] {\tiny 2} +(105:0.75cm);
\draw (d11) -> node[inner sep=0.02cm] {\tiny 0} +(165:0.75cm);
\draw (d11) -> node[inner sep=0.02cm] {\tiny 1} +(225:0.75cm);
\draw (d11) -> node[inner sep=0.02cm] {\tiny 2} +(285:0.75cm);
\draw (d12) -> node[inner sep=0.02cm] {\tiny 0} +(255:0.75cm);
\draw (d12) -> node[inner sep=0.02cm] {\tiny 1} +(315:0.75cm);
\draw (d12) -> node[inner sep=0.02cm] {\tiny 2} +( 15:0.75cm);
\draw (d13) -> node[inner sep=0.02cm] {\tiny 0} +(345:0.75cm);
\draw (d13) -> node[inner sep=0.02cm] {\tiny 1} +( 45:0.75cm);
\draw (d13) -> node[inner sep=0.02cm] {\tiny 2} +(105:0.75cm);
\draw (d14) -> node[inner sep=0.02cm] {\tiny 0} +(255:0.75cm);
\draw (d14) -> node[inner sep=0.02cm] {\tiny 1} +(315:0.75cm);
\draw (d14) -> node[inner sep=0.02cm] {\tiny 2} +( 15:0.75cm);
\end{tikzpicture} 
    \caption{The graph corresponding to $n=3$ and $(d_0,d_1,d_2)=(1,6,14)$}
    \label{fig:example}
\end{figure}

\subsection{Words of representations corresponding to arithmetic sequences}

If $D$ is an arithmetic sequence, then the words associated with periodic points can be described as repetends of $n$-ary expansions of certain rational numbers.
For $n=2$ this was proved in \cite{JL} and later for $n \geq 2$ in \cite{HW}.
In this subsection we recall these results and extend them for negative values of $n$.

\begin{theorem}\label{t:arithmetic B=A} 
	Let $\abs{n} \geq 2$ and let $D = \{ d_0,\dots,d_{\abs{n}-1} \}$ be a complete system of residues modulo $n$ such that $0\leq d_0\leq \abs{n-1}-1$.
	If $D$ is an arithmetic sequence, i.e., $d_i = d_0 + ih~(i \in \Sigma)$, where $h \in \N$ is relatively prime to $n$, then the periodic points of the corresponding representation are exactly the elements of $\mathcal{A}_n(D)$.
\end{theorem}

\begin{proof}
	This has been proved for positive $n$ in \cite[Theorem 3.1]{HW}.
	For negative $n$, the proof is almost the same: it suffices to prove that the restriction of $R$ to $\mathcal{A}_n(D)$ is injective (hence also bijective, by the pigeonhole principle). 
	Let $x,y,z\in\mathcal{A}_n(D)$ such that $x<y$ and $R(x)=R(y)=z$. 
	Then we have $x=nz+d_{i},y=nz+d_{j}$ for some $i,j \in \Sigma$, therefore $y-x=d_{j}-d_{i}=h(j-i) \geq h$. 
	Since the length of the interval defining $\mathcal{A}_n(D)$ in \ref{item:absorbing-negative} in Proposition~\ref{p:absorbing} is exactly $h$, the inequality $y-x \geq h$ can hold only if $x$ and $y$ are the left and the right endpoints of this interval (which must be integers then):
	\[
		x = \frac{-d_0n-d_{\abs{n}-1}}{n^2-1} \in \Z, \qquad y = \frac{-d_{\abs{n}-1}n-d_0}{n^2-1} \in \Z.
	\]
	Then we have $x \equiv d_{\abs{n}-1}~\pmod{n}$ and $y \equiv d_0~\pmod{n}$, which imply $R(x)=y$ and $R(y)=x$, contradicting the assumption $R(x)=R(y)$.
\end{proof}

\begin{definition}\label{d:value and reverse}
    For a word $c_{\ell-1}\ldots c_0=w\in\Sigma^+$ let $\overleftarrow{w}$ denote the reverse of $w$, i.e.,
    \(\overleftarrow{w}=c_0\ldots c_{\ell-1}\).
    Furthermore, let $\wval(w)$ denote the integer given by the $n$-ary expansion determined by $w$:
\[
    \wval(w)=(c_{\ell-1}\ldots c_0)_n=\sum_{i=0}^{\ell-1}c_{i}n^i.
\]
\end{definition}

The following lemma has been proved in \cite{HW} in the case when $n\geq2$. The proof is the same for $n\leq-2$, hence we omit it.
Theorem~\ref{t:arithmetic fractions} has also been obtained for positive $n$ in \cite{HW}, but we state it here in a slightly different form, and the proof is somewhat different if $n$ is negative.

\begin{lemma}[{\cite[Lemma 3.2]{HW}}]\label{l:arithmetic words}
	Let $\abs{n} \geq 2$ and let $D = \{ d_0,\dots,d_{\abs{n}-1} \}$ be a complete system of residues modulo $n$ such that $0\leq d_0\leq \abs{n-1}-1$.
	Assume that $D$ is an arithmetic sequence, i.e., $d_i = d_0 + ih~(i \in \Sigma)$, where $h \in \N$ is relatively prime to $n$.
	Let $w$ be a primitive word of length $\ell$ over $\Sigma$. 
	Then $w$ corresponds to a cycle if and only if the following number is an integer:
	\[
		-x:=\frac{d_{0}}{n-1}+\frac{h\wval\big(\overleftarrow{w}\big)}{n^{\ell}-1}.
	\]
    If this is the case, then $x$ is a periodic point, and $w$ is the word associated with $x$.
\end{lemma}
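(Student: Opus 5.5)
The plan is to unwind what it means for $w$ to label a cycle, and to solve the resulting affine fixed-point equation explicitly. Write $w=c_{\ell-1}\ldots c_0$. The word associated with a periodic point $x$ is read backward along the arrows, i.e. along the trajectory of $R$. So $w$ is the word of $x$ exactly when $R$ strips the letters of $w$ one at a time and returns to $x$ after $\ell$ steps. Equivalently, $x$ is a fixed point of the composition $f_{c_{\ell-1}}\circ\cdots\circ f_{c_0}$ or of its reverse; I will fix the orientation so that the reversed word $\overleftarrow{w}$ appears in the formula. The first step is to expand this composition. By induction,
\[
f_{c_{0}}\circ f_{c_1}\circ\cdots\circ f_{c_{\ell-1}}(x)=n^\ell x+\sum_{j=0}^{\ell-1} d_{c_j}n^{j}
\]
(with the indices matched to the reading direction).

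Next I use that $D$ is arithmetic, $d_{c_j}=d_0+c_jh$. The sum then splits as
\[
d_0\frac{n^\ell-1}{n-1}+h\wval\big(\overleftarrow{w}\big)
\]
for the appropriate orientation. Hence the fixed-point equation $n^\ell x+d_0\frac{n^\ell-1}{n-1}+h\wval(\overleftarrow{w})=x$ has the unique real solution
\[
x=-\frac{d_0}{n-1}-\frac{h\wval(\overleftarrow{w})}{n^\ell-1},
\]
which is exactly the number in the statement.

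For the "only if" direction: if $w$ is the word of a cycle through a periodic point $x\in\Z$, then $x$ satisfies this equation, so $-x$ is an integer.

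For the converse, suppose $-x\in\Z$. Define $x_\ell=x$ and $x_{k}=f_{c_k}(x_{k+1})$ going around the cycle. Each $x_k$ is an integer, since the $f_i$ map $\Z$ to $\Z$, and the composition returns to $x$ by the fixed-point equation. Because $R\circ f_i=\mathrm{id}$, applying $R$ retraces these steps backward, so $x$ is periodic under $R$ and the labels read backward along its cycle give $w$ up to its period. The cycle length is the least period of $x$. Since $w$ is primitive and the word read around the cycle repeats with that period, the word of $x$ must be exactly $w$. In more detail: if the true cycle length were a proper divisor $p$ of $\ell$, then $w$ would be a power of its length-$p$ prefix, contradicting primitivity. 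So $x$ has period exactly $\ell$ with associated word $w$.

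The main obstacle is purely bookkeeping: matching the reading direction of the labels (backward along arrows) with the order of composition, so that $\overleftarrow{w}$ rather than $w$ appears and the leading index corresponds to $n^{\ell-1}$. I will settle this first on a small example, say $\ell=2$. The hypotheses $0\le d_0\le\abs{n-1}-1$ and $\gcd(h,n)=1$ are not needed for this equivalence beyond guaranteeing that $D$ is a residue system. No sign issues arise for negative $n$, since only the algebraic identity is used.
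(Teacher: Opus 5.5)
Your strategy is the standard proof, and it is the same computation the paper itself uses in the proof of Theorem~\ref{t:arithmetic fractions}. You expand the $\ell$-fold composition and split the constant term using $d_i=d_0+ih$. You solve the affine equation, whose solution is unique since $n^\ell\neq 1$. For the converse you run the maps around and undo them with $R\circ f_i=\mathrm{id}$, which needs $D$ to be a complete residue system, i.e.\ $\gcd(h,n)=1$. Primitivity then forces exact period $\ell$. All of those steps are fine.

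The one genuine defect is the orientation, which you leave open (``I will fix the orientation so that $\overleftarrow{w}$ appears''). It is not a free choice. The first letter of the word of $x$ is the label of the unique edge entering $x$, i.e.\ the $i$ with $x=f_i(R(x))$. So the first letter of the word must be the \emph{outermost} map of the composition.

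As written, your indexing is inconsistent. You declare $w=c_{\ell-1}\cdots c_0$, but the composition you actually expand is $f_{c_0}\circ\cdots\circ f_{c_{\ell-1}}$. Its constant term is $\sum_j d_{c_j}n^j=d_0\frac{n^\ell-1}{n-1}+h\wval(w)$, not $d_0\frac{n^\ell-1}{n-1}+h\wval(\overleftarrow{w})$. Likewise, your chain $x_\ell=x$, $x_k=f_{c_k}(x_{k+1})$, read backward from $x_0=x$, spells $c_0c_1\cdots c_{\ell-1}=\overleftarrow{w}$, not $w$.

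The repair is simply to write $w=c_0c_1\cdots c_{\ell-1}$, as in the proof of Theorem~\ref{t:arithmetic fractions}. Then $\sum_j c_jn^j=\wval(c_{\ell-1}\cdots c_0)=\wval(\overleftarrow{w})$, and every formula you display, as well as your converse construction, is correct verbatim.

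This is not cosmetic, because $\WordsetD{D}$ is not closed under reversal. Take $n=2$, $D=\{0,89\}$ and $w=10100010000$. Then $\wval(\overleftarrow{w})=69$ and $-x=89\cdot 69/2047=3$. Indeed $R$ runs $-3\mapsto-46\mapsto-23\mapsto\cdots\mapsto-6\mapsto-3$, reading exactly $w$. For $\overleftarrow{w}$, however, the number is $89\cdot 1296/2047=1296/23\notin\Z$. Your formula as literally written would give the opposite verdicts.
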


\begin{theorem}\label{t:arithmetic fractions}
	Let $\abs{n} \geq 2$ and let $D = \{ d_0,\dots,d_{\abs{n}-1} \}$ be a complete system of residues modulo $n$ such that $0\leq d_0\leq \abs{n-1}-1$. Let $r$ be the residue of $-d_0$ modulo $\abs{n-1}$, i.e., $r=\abs{n-1}-d_0$ if $d_0 > 0$ and $r=0$ if $d_0=0$. 
	Assume that $D$ is an arithmetic sequence, i.e., $d_i = d_0 + ih~(i \in \Sigma)$, where $h \in \N$ is relatively prime to $n$.
	  Then $\WordsetD{D}$ consists of the reverses of repetends of the $n$-ary expansions of the fractions in the following set:
	\begin{equation}\label{e:Q(r,h)}
        \left\{q\in I_{n} \ \middle|\  \exists a\in\Z\colon q=\frac{a}{h(n-1)} \text{, and } a\equiv r \pmod{n-1} \right\}.
	\end{equation}
    (Recall the definition of $I_n$ from Corollary~\ref{c:unit interval}, and note that each of the fractions $q \in I_n$ appearing in the above formula has a unique $n$-ary expansion except possibly when $q$ is one of the endpoints of $I_n$ (see Remark~\ref{r:repetend}). In the latter case we consider the expansion of $q$ that is of the form $(0.c_{-1}c_{-2}\dots)_n$.)
\end{theorem}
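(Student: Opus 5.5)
The plan is to translate the integrality criterion of Lemma~\ref{l:arithmetic words} directly into a statement about fractions, using the standard identity for purely periodic expansions. For a word $u=c_1\dots c_\ell\in\Sigma^\ell$, the geometric series gives
\[
(0.c_1\dots c_\ell c_1\dots c_\ell\dots)_n=\frac{\wval(u)}{n^\ell-1},
\]
and this number lies in $I_n$ by Corollary~\ref{c:unit interval}. For a primitive word $w$ of length $\ell$, put $u=\overleftarrow{w}$ and $q_w:=\wval(\overleftarrow{w})/(n^\ell-1)$. Then $q_w\in I_n$, and $q_w$ has an expansion of the form $(0.c_{-1}c_{-2}\dots)_n$ whose digit sequence is $u$ repeated. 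Since $w$, and hence $u$, is primitive, the minimal period of this digit sequence is exactly $\ell$. So the repetend of this expansion is $u$, and the reverse of the repetend is $w$.

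Next I rewrite the criterion of Lemma~\ref{l:arithmetic words}. The number $-x=\frac{d_0}{n-1}+hq_w$ is an integer if and only if
\[
q_w=\frac{a}{h(n-1)}\quad\text{with } a:=-x(n-1)-d_0\in\Z .
\]
Such an $a$ automatically satisfies $a\equiv -d_0\equiv r\pmod{n-1}$. Conversely, if $q_w=\frac{a}{h(n-1)}$ with $a\equiv r\pmod{n-1}$, then
\[
hq_w+\frac{d_0}{n-1}=\frac{a+d_0}{n-1}\in\Z .
\]
Hence $w\in\WordsetD{D}$ if and only if $q_w$ belongs to the set \eqref{e:Q(r,h)}. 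This already gives the inclusion "$\subseteq$": every word of $\WordsetD{D}$ is the reverse of the repetend of some fraction in \eqref{e:Q(r,h)}.

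For the reverse inclusion, take $q$ in \eqref{e:Q(r,h)}. Its denominator divides $h(n-1)$, which is coprime to $n$, so by Proposition~\ref{p:rationalperiodic} every expansion of $q$ is purely periodic. Take the expansion of the form $(0.c_{-1}c_{-2}\dots)_n$ prescribed in Remark~\ref{r:repetend}, and let $u$ be its repetend, of length $\ell$; it is primitive. Summing the periodic expansion gives $q=\wval(u)/(n^\ell-1)=q_w$ for $w=\overleftarrow{u}$. The previous paragraph then shows $w\in\WordsetD{D}$.

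The only delicate point is the choice of expansion at the endpoints of $I_n$, where $q$ has two expansions. I must make sure that the expansion produced from a cycle word is the same one the theorem refers to.
\begin{itemize}
\item For $n\ge 2$, the relevant endpoint is $q=1=(0.ddd\dots)_n$, which arises from $w=d$. Only this expansion has the form $(0.c_{-1}\dots)_n$, so there is no ambiguity.
\item For $n\le -2$, Example~\ref{ex:endpoints} shows that at each endpoint exactly one of the two expansions has the form $(0.c_{-1}\dots)_n$, with repetend $0d$ or $d0$ respectively. The identity $q_w=(0.\overline{u})_n$ always yields an expansion of this form, so it coincides with the convention of the statement.
\end{itemize}
Together with the observation that primitivity of $w$ pins down the repetend length, this completes the proof. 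I expect this endpoint bookkeeping for negative $n$ to be the only step needing care.
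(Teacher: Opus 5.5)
Your proof is correct, and it follows a different, shorter route than the paper's.

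The paper argues from the side of periodic points. Lemma~\ref{l:arithmetic words} shows that the word of a periodic point $x$ is the reverse of the repetend of $g(x)=\frac{(n-1)(-x)-d_0}{h(n-1)}$. The paper then combines Theorem~\ref{t:arithmetic B=A} (the periodic points are exactly the elements of $\mathcal{A}_n(D)$) with separate computations for $n\geq 2$ and $n\leq -2$, showing that $x\in\mathcal{A}_n(D)$ if and only if $g(x)\in I_n$.

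You argue from the side of words, and you notice that the interval condition comes for free there. The number $\fraction(w)$ is the value of the expansion $(0.\,\overleftarrow{w}\,\overleftarrow{w}\dots)_n$, so it lies in $I_n$ by the definition of $I_n$. The integrality criterion of Lemma~\ref{l:arithmetic words} therefore becomes exactly $\fraction(w)\in\rQuoth{r}{h}$. You use $q\in I_n$ only in the converse direction, to guarantee that every fraction in the set has an expansion of the form $(0.c_{-1}c_{-2}\dots)_n$ and hence equals $\fraction(w)$ for some primitive $w$.

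As a result you need neither Theorem~\ref{t:arithmetic B=A}, nor the absorbing set of Proposition~\ref{p:absorbing}, nor the sign-dependent endpoint computations. Combined with the paper's computation that $\{x\in\Z : g(x)\in I_n\}=\mathcal{A}_n(D)$, your argument even yields Theorem~\ref{t:arithmetic B=A} as a consequence of the lemma rather than as an ingredient. What the paper's route gives in exchange is the explicit order-reversing bijection $g$ from the periodic points onto the set of fractions in the statement, which tells you which periodic point carries which word.
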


\begin{proof}
	Let $x$ be a periodic point and let $w=j_{0}\cdots j_{\ell-1}$ be a primitive word over $\Sigma$. 
	By Lemma~\ref{l:arithmetic words}, $w$ is the word associated with $x$ if and only if $-x=\frac{d_{0}}{n-1}+\frac{h\wval\big(\overleftarrow{w}\big)}{n^{\ell}-1}$, which is equivalent to
	\begin{align*}
		-\frac{x+\frac{d_{0}}{n-1}}{h} & =\frac{\wval\big(\overleftarrow{w}\big)/n^{\ell}}{1-1/n^{\ell}} = \wval\big(\overleftarrow{w}\big)\left(\frac{1}{n^{\ell}}+\frac{1}{n^{2\ell}}+\cdots\right) \\
		&  = j_{\ell-1}\frac{1}{n}+\cdots+j_{0}\frac{1}{n^{\ell}}+j_{\ell-1}\frac{1}{n^{\ell+1}}+\cdots+j_{0}\frac{1}{n^{2\ell}}+\cdots\\
		&  = (0. \, j_{\ell-1} \cdots j_{0} \, j_{\ell-1} \cdots j_{0} \cdots)_{n}.
	\end{align*}
	Thus, the word associated with $x$ is the reverse of the repetend of the $n$-ary expansion of $\frac{(n-1)(-x)-d_{0}}{h(n-1)}$. Let $$g\colon\R\to\R, g(x)=\frac{(n-1)(-x)-d_{0}}{h(n-1)}.$$ Observe that $g$ is an order reversing bijection from $\R$ onto $\R$.
    Note also that $g(x)$ is a fraction of the form as in \eqref{e:Q(r,h)} if and only if $x \in \Z$. 
	According to Theorem~\ref{t:arithmetic B=A}, to finish the proof it suffices to verify that 
    $x \in \mathcal{A}_n(D)$ if and only if $g(x) \in I_n$.
    We will show this seperately for $n\geq2$ and $n\leq-2$.
    
    If $n$ is positive, then we have $d_{n-1}=d_0+(n-1)h$. Using Proposition~\ref{p:absorbing} and Corollary~\ref{c:unit interval}, we need to prove that
    \begin{equation}\label{e:A_n positive}
    \frac{-(d_0+(n-1)h)}{n-1}\leq x\leq\frac{-d_0}{n-1}
    \end{equation}
    if and only if
    \[
    0\leq g(x)\leq1.
    \]
    Since $g$ is an order reversing bijection, by applying $g$ on the inequalities in \eqref{e:A_n positive} we get an equivalent inequality:
    \[
    1=g\left(\frac{-(d_0+(n-1)h)}{n-1}\right)\geq g(x)\geq g\left(\frac{-d_0}{n-1}\right)=0.
    \]
      
    If $n$ is negative, then we have $d_{\abs{n}-1}=d_0+(-n-1)h$. Using Proposition~\ref{p:absorbing} and Corollary~\ref{c:unit interval}, we need to show that for any $x\in\Z$, we have
    \begin{equation}\label{e:A_n negative}
    \frac{-d_0n-(d_0+(-n-1)h)}{n^2-1}\leq x\leq\frac{-(d_0+(-n-1)h)n-d_0}{n^2-1}
    \end{equation}
    if and only if
    \begin{equation}\label{e:I_n negative}
        \frac{-n}{n-1}\leq g(x)\leq\frac{-1}{n-1}.
    \end{equation}
    We can simplify the expressions in \eqref{e:A_n negative}:
    \[
    \frac{-d_0n-(d_0+(-n-1)h)}{n^2-1}=\frac{-d_0(n+1)+(n+1)h}{(n+1)(n-1)}=\frac{-d_0+h}{n-1},
    \]
    and    
    \[
    \frac{-(d_0+(-n-1)h)n-d_0}{n^2-1}=\frac{-d_0(n+1)+n(n+1)h}{(n+1)(n-1)}=\frac{-d_0+nh}{n-1}.
    \]
    Since $g$ is an order reversing bijection and 
    \[
    g\left(\frac{-d_0+h}{n-1}\right)=\frac{-1}{n-1}\quad\text{and}\quad g\left(\frac{-d_0+nh}{n-1}\right)=\frac{-n}{n-1},
    \]
    we see that \eqref{e:A_n negative} is indeed equivalent to \eqref{e:I_n negative}.
\end{proof}

Next we introduce some notation that allows us to formulate Theorem~\ref{t:arithmetic fractions} in a more compact form.

\begin{definition}\label{d:qw}
    For a word $w=c_0c_1\dots c_{\ell-1}$, let $\fraction(w)$ denote the following fraction:
    \[
    \fraction(w)=(0.c_{\ell-1}\dots c_0 c_{\ell-1}\dots c_0\dots)_n=\frac{\wval\big(\overleftarrow{w}\big)}{n^\ell-1}=\frac{\wval\big(\overleftarrow{w}\big)}{\frac{n^\ell-1}{n-1}(n-1)}.
    \]
\end{definition}
Clearly, the repetend of $\fraction(w)$ is the reverse of $w$ and we have $\fraction(w) \in I_n$.
\begin{definition}\label{d:QrhWrh}
    If $r\in\{0,1,\dots,\abs{n-1}-1\}$ and $h\in\N$ is relatively prime to $n$, then let
    \[
    \rQuoth{r}{h}\coloneqq\left\{q\in I_{n} \mathrel{}\middle|\mathrel{} \exists a\in\Z\colon q=\frac{a}{h(n-1)} \text{, and } a\equiv r \pmod{n-1}  \right\},
    \]
    and let 
    \[
    \rWordh{r}{h}\coloneqq\left\{w\in \mathcal{W} : \fraction(w)\in\rQuoth{r}{h}\right\}.
    \]
\end{definition}

\begin{corollary}\label{c:WD=rWH}
    Let $\abs{n} \geq 2$ and let $D = \{ d_0,\dots,d_{\abs{n}-1} \}$ be a complete system of residues modulo $n$ such that $0\leq d_0\leq \abs{n-1}-1$. Let $r$ be the residue of $-d_0$ modulo $\abs{n-1}$, i.e., $r=\abs{n-1}-d_0$ if $d_0 > 0$ and $r=0$ if $d_0=0$. 
	Assume that $D$ is an arithmetic sequence, i.e., $d_i = d_0 + ih~(i \in \Sigma)$, where $h \in \N$ is relatively prime to $n$. Then we have $\WordsetD{D}=\rWordh{r}{h}$. 
\end{corollary}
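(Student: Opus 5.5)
This is essentially a reformulation of Theorem~\ref{t:arithmetic fractions} in terms of Definition~\ref{d:QrhWrh}, so I will derive it directly from that theorem. I show two inclusions, and the main tool is the correspondence $w \mapsto \fraction(w)$ between primitive words and fractions in $I_n$.

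For $\WordsetD{D}\subseteq\rWordh{r}{h}$, take $w\in\WordsetD{D}$. By Theorem~\ref{t:arithmetic fractions}, $w$ is the reverse of the repetend of some $q\in\rQuoth{r}{h}$, where we take the expansion of $q$ of the form $(0.c_{-1}c_{-2}\dots)_n$. Since $q$ is rational with denominator dividing $h(n-1)$, and $\gcd(h(n-1),n)=1$, Proposition~\ref{p:rationalperiodic} shows that this expansion is purely periodic. Its repetend is then $\overleftarrow{w}$, so $q=(0.\overleftarrow{w}\,\overleftarrow{w}\dots)_n=\fraction(w)$ by Definition~\ref{d:qw}. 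Hence $\fraction(w)\in\rQuoth{r}{h}$, and $w$ is primitive, so $w\in\rWordh{r}{h}$.

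For the reverse inclusion, let $w\in\mathcal{W}$ with $\fraction(w)\in\rQuoth{r}{h}$. Then $\fraction(w)\in I_n$ has the expansion $(0.\overleftarrow{w}\,\overleftarrow{w}\dots)_n$, which is of the form chosen in Remark~\ref{r:repetend}. This is the step needing care. I must check that the repetend of $\fraction(w)$ in the sense of Remark~\ref{r:repetend} is exactly $\overleftarrow{w}$, not a proper root of it and not the repetend of a different expansion. The expansion is unique by that remark, since it is the only one of the form $(0.\dots)_n$, including at the endpoints of $I_n$ when $n\le -2$. Primitivity of $w$, hence of $\overleftarrow{w}$, ensures that $\overleftarrow{w}$ is the shortest repeating block. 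By Theorem~\ref{t:arithmetic fractions}, the reverse of this repetend, namely $w$, lies in $\WordsetD{D}$.

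I expect the only subtle point to be the endpoint cases for negative $n$. For example, $\fraction(0d)$ and $\fraction(d0)$ are the two endpoints of $I_n$, and each has a second expansion, but that second expansion has a nonzero integer part. The convention in Remark~\ref{r:repetend}, which Theorem~\ref{t:arithmetic fractions} explicitly adopts, therefore selects precisely the expansion $(0.\overleftarrow{w}\dots)_n$. Once that is noted, the two inclusions give $\WordsetD{D}=\rWordh{r}{h}$.
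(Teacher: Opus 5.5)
Your proof is correct and takes the same route the paper intends. The paper states this corollary without a separate proof, as a restatement of Theorem~\ref{t:arithmetic fractions}, resting on the fact that for a primitive $w$ the chosen expansion $(0.\overleftarrow{w}\,\overleftarrow{w}\dots)_n$ of $\fraction(w)$ has repetend $\overleftarrow{w}$. Your two inclusions, including the endpoint convention for $n\le -2$, spell out exactly the details the paper leaves implicit.
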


\begin{remark}\label{r:rWh cyclic shifts}
    Recall from Definition~\ref{d:WD} that $\WordsetD{D}$ is always closed under cyclic shifts, hence the same is true for $\rWordh{r}{h}$. The latter fact can also be verified directly by considering the repetends of the fractions in $\rQuoth{r}{h}$.
\end{remark}

\begin{example}\label{ex:Q(0,n^l-1)}
    Let us consider the case $r=0$, $h=n^\ell-1$ with $n \geq 2$.
    In this case we have    
    \[
    \rQuoth{r}{h}\coloneqq\left\{\frac{0}{n^\ell-1},\  \frac{1}{n^\ell-1},\  \frac{2}{n^\ell-1},\  \dots,\  \frac{n^\ell-1}{n^\ell-1} \right\}.
    \]
    The repetend of a fraction $a/(n^\ell-1)$ with $0 \leq a \leq n^\ell-1$ is the unique primitive word $w$ such that $\wval(w^k)=a$ for some positive integer $k$.
    Here $k$ is a divisor of $\ell$ and $\abs{w} = \ell/k$.
    Conversely, if $\abs{w}$ divides $\ell$ and $a = \wval(w^k)$ with $k = \ell/\abs{w}$, then $a/(n^\ell-1) \in \rQuoth{r}{h}$.
    We see that the repetends of the fractions in $\rQuoth{r}{h}$ are exactly the primitive words whose length is a divisor of $\ell$; therefore, $\rWordh{r}{h}$ consists of these words, too (note that we have to take the reverses of the repetends, but this does not change the set that we obtain).
    This result appears as Corollary 5.8 in \cite{JL} for $n=2$.

\end{example}

\section{The closure operator on primitive words}\label{sec:arithmetic}
We have seen in the previous section that if $D$ is an arithmetic sequence, then $\WordsetD{D}$ can be calculated using $n$-ary expansions of certain fractions.
In this section we will use this to construct a closure system on $\mathcal{W}$ that will help us characterize the sets $\WordsetD{D}$ corresponding to arithmetic sequences. By Corollary~\ref{c:WD=rWH}, these sets coincide with the sets of the form $\rWordh{r}{h}$.

\subsection{The lattice of closed sets}

Our goal in this section is to prove that the sets $\rWordh{r}{h}$ together with the empty set form a lattice under inclusion.

\begin{proposition}\label{p:leq}
    Let $h_1,h_2\in\N$ such that $\gcd(h_1,n)=\gcd(h_2,n)=1$, and let $r_1,r_2\in\{0,1,\dots,\abs{n-1}-1\}$.
    Then we have $\rQuoth{r_1}{h_1}\subseteq\rQuoth{r_2}{h_2}$, or, equivalently, $\rWordh{r_1}{h_1}\subseteq\rWordh{r_2}{h_2}$, if and only if
    \[
    h_1\mid h_2 \quad\text{and}\quad r_1\cdot\frac{h_2}{h_1}\equiv r_2\pmod{n-1}.
    \]
\end{proposition}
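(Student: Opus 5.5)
First I settle the equivalence of the two inclusions, then prove the arithmetic criterion for the sets $\rQuoth{r}{h}$. The map $w\mapsto \fraction(w)$ sends $\mathcal{W}$ onto the purely periodic rationals in $I_n$ (with the convention of Remark~\ref{r:repetend}). Every element of $\rQuoth{r}{h}$ has denominator coprime to $n$, because $\gcd(h,n)=1$ and $\gcd(n-1,n)=1$; so by Proposition~\ref{p:rationalperiodic} it is purely periodic. Distinct fractions have distinct chosen expansions, so $q\in I_n$ is $\fraction(w)$ for exactly one primitive $w$, namely the reverse of its repetend. Hence $\rWordh{r}{h}$ is in bijection with $\rQuoth{r}{h}$ via $w\mapsto\fraction(w)$, and the two inclusions are equivalent.

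For sufficiency, assume $h_1\mid h_2$ and $r_1h_2/h_1\equiv r_2 \pmod{n-1}$, and let $q=a/(h_1(n-1))\in\rQuoth{r_1}{h_1}$ with $a\equiv r_1$. Write $q=a'/(h_2(n-1))$ with $a'=a\,h_2/h_1$. Then $a'\equiv r_1h_2/h_1\equiv r_2 \pmod{n-1}$, so $q\in\rQuoth{r_2}{h_2}$.

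For necessity, assume $\rQuoth{r_1}{h_1}\subseteq\rQuoth{r_2}{h_2}$. I would test the inclusion on elements whose reduced denominator is as large as possible. Note that $\rQuoth{r}{h}$ is the intersection of $I_n$ with the coset $\{a/(h(n-1)) : a\equiv r\}$, which has spacing $1/h$. Since $I_n$ is a closed interval of length $1$, it contains at least $h_1$ such points; these are consecutive values $a, a+(n-1),\dots$, spanning $h_1$ consecutive residues modulo $h_1$.

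The key step is to choose $a\equiv r_1 \pmod{n-1}$ with $\gcd(a,h_1)=1$ and $a/(h_1(n-1))\in I_n$. Among $h_1$ consecutive terms of the progression $a_0+k(n-1)$, every residue modulo $h_1$ occurs, because $\gcd(n-1,h_1)$ need not be $1$ but the step can be adjusted. I expect this to be the main obstacle. Indeed, if $\gcd(n-1,h_1)=g>1$, the progression only hits residues $\equiv a_0 \pmod g$, and $a_0$ might share a factor with $g$ (for example, when $r_1=0$).

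So I would not demand full reduction. Instead, take $q=a/(h_1(n-1))$ and use membership in $\rQuoth{r_2}{h_2}$, i.e. $a h_2(n-1) = a' h_1(n-1)$ with $a'\equiv r_2$. This gives $a h_2 = a' h_1$. Choosing two consecutive admissible values $a$ and $a+(n-1)$, both in $I_n$, and subtracting yields $(n-1)h_2 = (a'_2-a'_1)h_1$ with $a'_2-a'_1\equiv 0 \pmod{n-1}$. Hence $h_2/h_1 = (a'_2-a'_1)/(n-1)\in\Z$, i.e. $h_1\mid h_2$. This requires $h_1\ge 2$ so that $I_n$ contains two consecutive points, which holds by the length argument (for $h_1=1$ divisibility is trivial). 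With $h_1\mid h_2$ in hand, any single element gives $a' = a h_2/h_1 \equiv r_1 h_2/h_1$, and $a'\equiv r_2$, which yields the congruence.

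I also need $\rQuoth{r_1}{h_1}\neq\emptyset$. It is, since $I_n$ is closed of length $1\ge 1/h_1$. Thus the subtraction trick replaces the coprimality issue.
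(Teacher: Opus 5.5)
Your final argument is correct and is essentially the paper's proof. Sufficiency is by rescaling the numerator. For necessity you subtract the memberships of two consecutive elements $a$ and $a+(n-1)$ of $\rQuoth{r_1}{h_1}$, which exist when $h_1\ge 2$, and then read off the congruence from any single element. Your bijection $w\mapsto\fraction(w)$ also proves the equivalence of the two inclusions, which the paper only asserts. The detour about choosing $a$ coprime to $h_1$ contains a false claim: the progression $a_0+k(n-1)$ need not hit every residue modulo $h_1$. You discard that detour, so it does not affect the proof.
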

\begin{proof}
    Assume that $\rQuoth{r_1}{h_1}\subseteq\rQuoth{r_2}{h_2}$.   
    If $h_1=1$, then there is $b\in\Z$ such that 
    \[\frac{(n-1)b+r_1}{n-1}\in\rQuoth{r_1}{1},\] 
    because the length of $I_n$ is 1. It is clear that $h_1 \mid h_2$, as $h_1=1$. Since
    \[
    \frac{(n-1)b+r_1}{n-1}=\frac{h_2(n-1)b+h_2\cdot r_1}{h_2(n-1)}\in\rQuoth{r_2}{h_2},
    \] we have $h_2\cdot r_1=r_1\frac{h_2}{h_1}\equiv r_2\pmod{n-1}$.
    
    If $h_1>1$, then $\abs{\rQuoth{r_1}{h_1}}\geq2$, namely $\frac{(n-1)b+r_1}{h_1(n-1)}\in\rQuoth{r_1}{h_1}$ and $\frac{(n-1)(b+1)+r_1}{h_1(n-1)}\in\rQuoth{r_1}{h_1}$ for some $b \in \Z$, because $I_n$ is of length 1.
    Since $\rQuoth{r_1}{h_1}\subseteq\rQuoth{r_2}{h_2}$, we have
    \begin{align*}
        \frac{(n-1)b+r_1}{h_1(n-1)} &= \frac{((n-1)b+r_1)\cdot\frac{h_2}{h_1}}{h_2(n-1)} \in \rQuoth{r_2}{h_2},
        \;\text{ and }\;\\[1mm]
        \frac{(n-1)(b+1)+r_1}{h_1(n-1)} &= \frac{((n-1)(b+1)+r_1)\cdot\frac{h_2}{h_1}}{h_2(n-1)} \in \rQuoth{r_2}{h_2}.
    \end{align*}
    This means that the numerators of the above fractions are integers, and 
    \begin{equation}\label{eq: r2}
        ((n-1)b+r_1)\cdot\frac{h_2}{h_1}\equiv((n-1)(b+1)+r_1)\cdot\frac{h_2}{h_1}\pmod{n-1},
    \end{equation}
    as both numbers are congruent to $r_2$ modulo $n-1$ by the definition of $\rQuoth{r_2}{h_2}$.
    Thus the difference of the left and right hand sides of \eqref{eq: r2} is divisible by $n-1$, i.e., $(n-1)\mid(n-1)\frac{h_2}{h_1}$ and so $h_1\mid h_2$. Moreover, since $\frac{h_2}{h_1}\in\Z$, we have
\[
((n-1)b+r_1)\cdot\frac{h_2}{h_1}\equiv
(n-1)b\cdot\frac{h_2}{h_1}+r_1\cdot\frac{h_2}{h_1}\equiv
r_1\cdot\frac{h_2}{h_1}\equiv r_2\pmod{n-1}.
\]

Conversely, if $h_1\mid h_2$ and $r_1\cdot\frac{h_2}{h_1}\equiv r_2\pmod{n-1}$, then $r_1\cdot\frac{h_2}{h_1}=c(n-1)+r_2$ for some integer $c$. Therefore, for any $\frac{(n-1)b+r_1}{h_1(n-1)}\in\rQuoth{r_1}{h_1}$, we have 
\[\frac{(n-1)b+r_1}{h_1(n-1)}=\frac{\frac{h_2}{h_1}(n-1)b+c(n-1)+r_2}{h_2(n-1)},\]
which means that $\frac{(n-1)b+r_1}{h_1(n-1)}\in\rQuoth{r_2}{h_2}$. Thus we can conclude that $\rQuoth{r_1}{h_1}\subseteq\rQuoth{r_2}{h_2}$.
\end{proof}

\begin{definition}\label{def:latn}
Let
\[
\latn\coloneqq\Big(\{0,1,2,\dots,\abs{n-1}-1\}\times\{h\in\N\colon\gcd(h, n)=1\}\Big) \,\cup\, \{\bot\},
\]
 and let $\leq$ be the partial order on $\latn$ such that $\bot$ is the least element and
\[ 
(r_1,h_1)\leq(r_2,h_2)\quad\ \  \iff \quad\ \  h_1\mid h_2 \quad \text{and}\quad r_1\cdot\frac{h_2}{h_1}\equiv r_2\pmod{n-1}.
\]
\end{definition}
By Proposition \ref{p:leq}, $\leq$ is indeed a partial order; moreover, the poset $(\latn\setminus\{\bot\};\leq)$ is isomorphic to the poset of the sets $\rQuoth{r}{h}$, ordered by inclusion.

\begin{theorem}\label{t:lat}
    The relation $\leq$ is a lattice order on $\latn$. Furthermore, $(r_1,h_1)\wedge(r_2,h_2)\neq \bot$ if and only if 
    \begin{equation}\label{cond}
        r_1\frac{h_2}{\gcd(h_1,h_2)}\equiv r_2\frac{h_1}{\gcd(h_1,h_2)}\pmod{n-1}.
    \end{equation}
\end{theorem}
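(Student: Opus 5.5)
Since $\bot$ is the least element, it suffices to construct joins and meets of two non-$\bot$ elements $(r_1,h_1),(r_2,h_2)$. All congruences are modulo $m:=\abs{n-1}$. By Definition~\ref{def:latn}, an element $(r,h)$ lies above both of them exactly when $h_1\mid h$, $h_2\mid h$, $r\equiv r_1 h/h_1$ and $r\equiv r_2h/h_2$. The same definition shows that $(r,h)$ lies below both exactly when $h\mid h_1$, $h\mid h_2$, $r h_1/h\equiv r_1$ and $r h_2/h\equiv r_2$. Throughout we use that every $h$ occurring is coprime to $n$, and the fact that $\gcd(n,n-1)=1$ gives no control over $\gcd(h,m)$; indeed $h$ and $n-1$ may share factors.

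\textbf{Join.} Let $L=\lcm(h_1,h_2)$, which is again coprime to $n$. Set $r\equiv r_1L/h_1$. I will check that $r_1L/h_1\equiv r_2L/h_2$ always holds: writing $g=\gcd(h_1,h_2)$, this reads $(h_2/g)r_1\equiv(h_1/g)r_2$, which is condition~\eqref{cond}. So this candidate works only when \eqref{cond} holds. When \eqref{cond} fails, I would take the least common multiple $L'$ of $h_1,h_2$ and $m$-related factors: look for the least multiple $H$ of $L$ with $H$ coprime to $n$ such that $r_1H/h_1\equiv r_2H/h_2$. This is the condition $(H/L)\bigl(r_1L/h_1-r_2L/h_2\bigr)\equiv0$, i.e.\ $H/L$ must be a multiple of $k:=m/\gcd(m,\,r_1L/h_1-r_2L/h_2)$. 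Since $k\mid m=\abs{n-1}$, $k$ is coprime to $n$, so $(r_1kL/h_1 \bmod m,\;kL)$ is an upper bound. It is the least one because any upper bound $(r,H)$ has $L\mid H$ and $k\mid H/L$; then Proposition~\ref{p:leq} transports the residue, since $r\equiv r_1H/h_1=(r_1kL/h_1)\cdot H/(kL)$. So joins always exist, and with $k=1$ exactly under \eqref{cond}.

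\textbf{Meet.} I expect this to be the main point, since a non-$\bot$ lower bound need not exist. If \eqref{cond} holds, I will exhibit $(s,g)$ with $g=\gcd(h_1,h_2)$ lying below both. Writing $h_i=g u_i$ with $\gcd(u_1,u_2)=1$, \eqref{cond} reads $r_1u_2\equiv r_2u_1$. I need $s$ with $su_1\equiv r_1$ and $su_2\equiv r_2$. Choose integers $\alpha,\beta$ with $\alpha u_1+\beta u_2=1$ and put $s=\alpha r_1+\beta r_2$. Then
\[
su_1=\alpha u_1r_1+\beta u_1r_2\equiv \alpha u_1 r_1+\beta u_2r_1=r_1,
\]
and $su_2\equiv r_2$ follows symmetrically. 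Thus the set of non-$\bot$ lower bounds is nonempty. These lower bounds $(s',h')$ have $h'\mid g$, and in general there are several of them with different denominators. I would show that the set of lower bounds is closed under joins. The join of two lower bounds is below both $(r_i,h_i)$ because it is the least upper bound of two elements below each, and the join exists by the previous step. Since there are finitely many lower bounds (finitely many $h'\mid g$ and finitely many residues), their join is the greatest lower bound. Conversely, if some $(s,h)\neq\bot$ lies below both, then $h\mid g$, and with $t=g/h$ we get $r_1\equiv s t u_1$ and $r_2\equiv s t u_2$, hence $r_1u_2\equiv stu_1u_2\equiv r_2u_1$, which is \eqref{cond}. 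So when \eqref{cond} fails, $\bot$ is the only lower bound and is the meet.

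Combining the two steps, every pair of elements of $\latn$ has a join and a meet, so $\leq$ is a lattice order, and the meet is different from $\bot$ exactly when \eqref{cond} holds.
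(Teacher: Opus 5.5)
Your proof is correct. The join is the paper's construction restricted to two elements. You take $L=\lcm(h_1,h_2)$ and multiply by $k=m/\gcd(m,\,r_1L/h_1-r_2L/h_2)$, which is the paper's $m'$, and your minimality argument is the same as the paper's. You also check explicitly that $kL$ is coprime to $n$, which the paper leaves implicit. Clean up the opening of that paragraph: the claim that $r_1L/h_1\equiv r_2L/h_2$ ``always holds'' is false as written and is contradicted by your next sentence. The phrase ``$m$-related factors'' should simply be replaced by the definition of $k$.

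For the meet you take a genuinely different route. The paper's argument has three parts:
\begin{itemize}
\item It lifts every common lower bound to one with denominator $\gcd(h_1,h_2)$.
\item It uses B\'ezout to show that such a bound has forced residue $s_0r_1+t_0r_2$, so there is a unique maximal, hence greatest, lower bound.
\item It derives \eqref{cond} by varying $(s_\alpha,t_\alpha)$, and proves the converse by a separate solvability and CRT-style argument.
\end{itemize}
You instead argue as follows:
\begin{itemize}
\item You get existence of the meet abstractly: there are finitely many common lower bounds, and their join is again a common lower bound because joins exist.
\item You prove ``meet $\neq\bot\Rightarrow$ \eqref{cond}'' directly from the two lower-bound congruences.
\item You prove the converse by checking in one line that $s=\alpha r_1+\beta r_2$ solves both congruences. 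This replaces the paper's entire CRT discussion.
\end{itemize}

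Your version is shorter and more conceptual. The paper's version yields the explicit formula $(r_1,h_1)\wedge(r_2,h_2)=(s_0r_1+t_0r_2 \bmod (n-1),\gcd(h_1,h_2))$. You can recover that formula with one more line. For any lower bound $(s',h')$, the number $x=s'g/h'$ satisfies $xu_i\equiv r_i$. Hence $x\equiv x(\alpha u_1+\beta u_2)\equiv \alpha r_1+\beta r_2=s$, so $(s,g)$ is already the greatest lower bound. With that line, the join-of-all-lower-bounds step becomes unnecessary.
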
  

\begin{proof}
First we show that any finite set of elements has a least common upper bound. (Of course, it would suffice to consider just two elements, but later we will need the formula for the join of finitely many elements.)

We only need to consider the case when none of the elements are the bottom element $\bot$. Let us consider the elements $(r_1,h_1),\dots,(r_k,h_k)$, and let $h'=\lcm(h_1,\dots,h_k)$. Then for every $i\in [k]$, we have $(r_i,h_i)\leq(r_i',h')$, where $r_i'\equiv r_i\frac{h'}{h_i}\pmod{n-1}$.
(Here, and in the sequel, $[k]$ denotes the set $\{1,\dots,k\}$.)
First we find a common upper bound $(r,h)$ for the elements $(r'_i,h')$; it is clear that $(r,h)$ is then also a common upper bound for the elements $(r_i,h_i)$. Thus we need to find $m\in\N$ and $r\in\{0,\dots,\abs{n-1}-1\}$ such that
\[
r_1'm\equiv\dots\equiv r_k'm\equiv r\pmod{n-1},
\]
in which case the common upper bound is $(r,h'm)$. 
From the above congruences it follows that $r_i'm\equiv r_j'm\pmod{n-1}$; therefore,  $m\equiv0\pmod{\frac{n-1}{\gcd(n-1,r_i'-r_j')}}$. This holds for every $i,j\in[k]$, so 

\begin{equation}\label{e:m'}
    m' := \lcm\left\{\frac{n-1}{\gcd(n-1,r_i'-r_j')}\mathrel{}\middle|\mathrel{}i,j\in[k]
\right\}
\text{ divides } m.
\end{equation}
Conversely, if $m' \mid m$, then for every $i,j\in[k]$, we have $r_i'm\equiv r_j'm\pmod{n-1}$. Thus, if $r\equiv r_1'm\pmod{n-1}$, then $(r,h' m)$ is a common upper bound of the elements $(r'_i,h')$.

Let us choose $m=m'$, $r\equiv r_1'm'\pmod{n-1}$ and $h=h'm'$.
From the above considerations it follows that $(r,h)$ is then a common upper bound of the elements $(r_i,h_i)$; we will prove that it is actually the least common upper bound.
Let us take an arbitrary common upper bound $(r^*,h^*)$; we need to show that $(r,h)\leq(r^*,h^*)$. 
We know that $h_1,\ldots,h_k\mid h^*$, so $h'\mid h^*$. Then for every $i\in[k]$, we have $r_i'\frac{h^*}{h'}\equiv r_i\frac{h'}{h_i}\frac{h^*}{h'}\equiv r_i\frac{h^*}{h_i} \equiv r^*\pmod{n-1}$, where the last congruence holds because $(r_i,h_i) \leq (r^*,h^*)$. 
Thus $r_i'\frac{h^*}{h'}\equiv r_j'\frac{h^*}{h'}\pmod{n-1}$ for all $i,j \in [k]$, and this together with \eqref{e:m'} implies $m'\mid \frac{h^*}{h'}$. This means that 
\[
h=h'm' \mathrel{\bigg|} h'\frac{h^*}{h'}=h^* 
\quad \text{and}\quad
r\frac{h^*}{h}=r_1'm'\frac{h^*}{h'm'}=r_1'\frac{h^*}{h'}
\equiv r^*\pmod{n-1},
\]
proving $(r,h)\leq(r^*,h^*)$.

Next we prove that any two elements $(r_1,h_1)$ and $(r_2,h_2)$ have a greatest common lower bound.
If the only common lower bound of two elements is $\bot$ then it is the greatest common lower bound.

Otherwise there is a common lower bound of the form $(r,h)$. Then $h$ is a common divisor of $h_1$ and $h_2$, i.e., $h \mid h'$, where $h'=\gcd(h_1,h_2)$.
Let $r'\in\{0,\dots,\abs{n-1}-1\}$ such that $r'\equiv r\frac{h'}{h}\pmod{n-1}$; then we have $(r,h)\leq(r',h')$. 
The element $(r',h')$ is also a common lower bound of $(r_1,h_1)$ and $(r_2,h_2)$, since for every $i\in\{1,2\}$, we have $h'\mid h_i$ and 
\[
r'\frac{h_i}{h'}\equiv r\frac{h'}{h}\frac{h_i}{h'}\equiv r\frac{h_i}{h}\equiv  r_i \pmod{n-1},
\]
where the last congruence follows from $(r,h) \leq (r_i,h_i)$. 
Therefore, every maximal common lower bound has the form $(p,h')$.
(Note that maximal common lower bounds do exist, as there are only finitely many elements below any given element of $\latn$.)
If $(p,h')$ is a common lower bound, then
$x=p$ is a solution to the system of congruences
\begin{equation}\label{sys}
    \left\{\begin{array}{l}
     x\frac{h_1}{h'}\equiv r_1\pmod{n-1}  \\[1mm]
     x\frac{h_2}{h'}\equiv r_2\pmod{n-1}.  
\end{array}\right. 
\end{equation} Since $\gcd\left(\frac{h_1}{h'},\frac{h_2}{h'}\right)=1$, the Diophantine equation $s\frac{h_1}{h'}+t\frac{h_2}{h'}=1$ is solvable. If $(s_0,t_0)$ is a solution, then the set of all solutions is $\{(s_\alpha,t_\alpha)\colon\alpha\in\Z\}$,
where $s_\alpha=s_0+\frac{h_2}{h'}\alpha$ and $t_\alpha=t_0-\frac{h_1}{h'}\alpha$.

Let us multiply the first congruence of \eqref{sys} by $s_0$ and the second congruence by $t_0$, and let us take the sum of the two congruences:
\begin{equation}\label{bize}
s_0r_1+t_0r_2 \equiv s_0x\frac{h_1}{h'} + t_0x\frac{h_2}{h'}\equiv x\left(s_0\frac{h_1}{h'}+t_0\frac{h_2}{h'}\right) \equiv x \pmod{n-1}.
\end{equation}
This shows that the least nonnegative solution $p$ of \eqref{sys} satisfies $p \equiv s_0r_1+t_0r_2 \pmod{n-1}$; moreover, $p$ is the unique solution in $\{0,1,\dots,\abs{n-1}-1 \}$.
Thus there is a unique common lower bound of the form $(p,h')$, and this is necessarily the greatest common lower bound.

Finally we prove that $(r_1,h_1) \wedge (r_2,h_2)\neq\bot$ if and only if \eqref{cond} holds.
Assume first that $(r_1,h_1) \wedge (r_2,h_2) = (p,h')$, where $h' = \gcd(h_1,h_2)$. Then $p$ is a solution of \eqref{sys}, hence we have $p \equiv s_0r_1+t_0r_2 \pmod{n-1}$ by \eqref{bize}. A similar computation shows that $p \equiv s_\alpha r_1+t_\alpha r_2 \pmod{n-1}$ for any integer $\alpha$.
Subtracting the latter two congruences, we get
\[
    0 \equiv s_\alpha r_1+t_\alpha r_2-(s_0r_1+t_0r_2)  \equiv \alpha r_1\frac{h_2}{h'}-\alpha r_2\frac{h_1}{h'} \pmod{n-1}.
\] 
Here $\alpha=1$ gives \eqref{cond}.

Conversely assume \eqref{cond}, and let us prove that there is a $p\in\{0,\dots,\abs{n-1}-1\}$ such that \eqref{sys} holds for $x=p$.
First we verify that both congruences are solvable, and then we show that the two congruences have a common solution.

The first congruence has a solution if and only if $d_1\coloneqq\gcd(\frac{h_1}{h'},n-1)\mid r_1$. Since $d_1\mid r_2\frac{h_1}{h'}$, we can deduce from \eqref{cond} that $d_1\mid r_1\frac{h_2}{h'}$. This implies $d_1 \mid r_1$, as $\gcd(d_1,\frac{h_2}{h'})=1$, hence the first congruence has a solution. The solvability of the second congruence can be proved in a similar way.

Let $d_1=\gcd(\frac{h_1}{h'},n-1)$ and $d_2=\gcd(\frac{h_2}{h'},n-1)$, then \eqref{sys} is equivalent to
\[
    \left\{\begin{array}{l}
     x\equiv q_1\pmod{\frac{n-1}{d_1}}  \\[1mm]
     x\equiv q_2\pmod{\frac{n-1}{d_2}},  
    \end{array}\right. 
\]
for suitable integers $q_1,q_2$. Since $\gcd(d_1, d_2)=1$, we have $\gcd\left(\frac{n-1}{d_1},\frac{n-1}{d_2}\right)=\frac{n-1}{d_1d_2}$, thus the system above is solvable if and only if $\frac{n-1}{d_1d_2}\mid q_2-q_1$.

Substituting \eqref{sys} into \eqref{cond}, we can deduce that 
\[
    q_1\frac{h_1}{h'}\frac{h_2}{h'}\equiv q_2\frac{h_2}{h'}\frac{h_1}{h'}\pmod{n-1}.
\]
Observe that $\gcd(\frac{h_1h_2}{h'h'},n-1) = d_1d_2$, because $\frac{h_1}{h'}$ and $\frac{h_2}{h'}$ are relatively prime.
Thus the congruence above is equivalent to $q_1\equiv q_2\pmod{\frac{n-1}{d_1d_2}}$, which implies $\frac{n-1}{d_1d_2} \mathrel{\big|} q_1-q_2$, as required.
\end{proof}

\subsection{The closure operator}

We will see that the sets of words corresponding to one-dimensional strong affine representations of $\mathcal{P}_{\abs{n}}$ induced by arithmetic sequences (together with $\emptyset$ and $\mathcal{W}$) form a closure system (see Theorem~\ref{t:closure}), and we will give a method to compute the closure $\cl(W)$ of a set of primitive words $W$.
This provides a characterization of the aforementioned representations of the polycyclic monoids.
First we need to introduce some notation in order to state these results.  

Recall the notations $\fraction(w)$ and $\rQuoth{r}{h}$ from Definitions~\ref{d:qw} and \ref{d:QrhWrh}.
Let $q=\frac{a}{h(n-1)}\in\rQuoth{r}{h}$; then $q=\fraction(w)$ if and only if $\frac{a}{h}$ is an expansion of the reduced form of $\fraction(w) \cdot (n-1) = \frac{\wval\big(\overleftarrow{w}\big)}{\frac{n^\ell-1}{n-1}}$. Let $\denom(w)$ and $\numer(w)$ denote the denominator and numerator of this reduced fraction:
\begin{equation}\label{e:def h es a}    
\denom(w)=\frac{\frac{n^\ell-1}{n-1}}{\gcd\left(\frac{n^\ell-1}{n-1},\wval\big(\overleftarrow{w}\big)\right)}\quad\text{and}\quad  \numer(w)=\frac{\wval\big(\overleftarrow{w}\big)}{\gcd\left(\frac{n^\ell-1}{n-1},\wval\big(\overleftarrow{w}\big)\right)}.
\end{equation}
Note that the arguments of the greatest common divisors above may be negative. In this case we choose the sign of the gcd in the first fraction in such a way that $\denom(w)$ becomes positive. We use the same sign in the second fraction, allowing $\numer(w)$ to be negative.
The least set $\rQuoth{r}{h}$ containing $\fraction(w)=\frac{\numer(w)}{\denom(w)(n-1)}$ is $\rQuoth{\rem(w)}{\denom(w)}$, where 
\begin{equation}\label{e:def r}
    \rem(w)\in\{0,\dots,\abs {n-1}-1\} \text{ such that } \numer(w)\equiv\rem(w)\pmod{n-1}.
\end{equation}

Now we are ready to state our main result that characterizes the representations in question.

\begin{theorem}\label{t:closure}
    For any integer $n$ with $\abs{n} \geq 2$ and for any nonempty finite set $W \subseteq \mathcal{W}$ of primitive words over $\Sigma$, there exists a least set $\cl(W)$ of words such that
    \begin{enumerate}[(i)]
        \item $\cl(W) \supseteq W$, and
        \item $\cl(W) = \WordsetD{D}$, where $D$ is an arithmetic sequence.
    \end{enumerate}
    The closure $\cl(W)$ can be computed using the join operation of the lattice $\latn$ as follows: 
    \[
    \cl(W)=\rWordh{r}{h}, \text{ where }
    (r,h)=\bigvee_{w\in W}(\rem(w),\denom(w)).
    \]
\end{theorem}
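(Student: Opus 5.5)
The plan is to reduce everything to the lattice $\latn$ via Corollary~\ref{c:WD=rWH} and Proposition~\ref{p:leq}. By Corollary~\ref{c:WD=rWH}, the sets $\WordsetD{D}$ with $D$ an arithmetic sequence are exactly the sets $\rWordh{r}{h}$ with $r\in\{0,\dots,\abs{n-1}-1\}$ and $\gcd(h,n)=1$. By Fact~\ref{f:+(n-1)}, any arithmetic $D$ can be shifted to satisfy $0\le d_0\le\abs{n-1}-1$, and every pair $(r,h)$ arises from some such $D$: take $d_0\equiv -r$ and $d_i=d_0+ih$, which is a complete residue system because $\gcd(h,n)=1$. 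Inclusion among these sets corresponds to $\le$ in $\latn$ by Proposition~\ref{p:leq}. So it suffices to show two things: for a single word $w$, the pair $(\rem(w),\denom(w))$ is the least element of $\latn\setminus\{\bot\}$ whose word set contains $w$; and then that the join of these pairs gives the least word set containing all of $W$.

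\textbf{Single word.} Let $w\in\mathcal{W}$. First I check that $\denom(w)$ is coprime to $n$. Since $\frac{n^\ell-1}{n-1}\equiv 1\pmod n$, it is coprime to $n$, and hence so is its divisor $\denom(w)$. So $(\rem(w),\denom(w))\in\latn$. Next, $\fraction(w)=\frac{\numer(w)}{\denom(w)(n-1)}$ and $\numer(w)\equiv\rem(w)\pmod{n-1}$, so $\fraction(w)\in\rQuoth{\rem(w)}{\denom(w)}$, using $\fraction(w)\in I_n$ from the remark after Definition~\ref{d:qw}. Hence $w\in\rWordh{\rem(w)}{\denom(w)}$.

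Now suppose $w\in\rWordh{r}{h}$. Then $\fraction(w)=\frac{a}{h(n-1)}$ with $a\equiv r\pmod{n-1}$, which means $\frac{\numer(w)}{\denom(w)}=\frac{a}{h}$. Since the left side is reduced, $h=\denom(w)\,t$ and $a=\numer(w)\,t$ for some $t\in\N$. Therefore $\rem(w)\cdot\frac{h}{\denom(w)}\equiv \numer(w)\,t=a\equiv r\pmod{n-1}$. By Definition~\ref{def:latn} this says $(\rem(w),\denom(w))\le(r,h)$, proving minimality.

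\textbf{Several words.} Put $(r,h)=\bigvee_{w\in W}(\rem(w),\denom(w))$; this join exists and is not $\bot$ by Theorem~\ref{t:lat}, whose proof constructs joins of finite sets. Each $(\rem(w),\denom(w))\le(r,h)$, so by Proposition~\ref{p:leq} $w\in\rWordh{\rem(w),\denom(w)}\subseteq\rWordh{r}{h}$, giving $W\subseteq\rWordh{r}{h}$. Conversely, let $\WordsetD{D}=\rWordh{r^*}{h^*}$ be any arithmetic-sequence set containing $W$. By the single-word step, $(\rem(w),\denom(w))\le(r^*,h^*)$ for every $w\in W$. Hence $(r,h)\le(r^*,h^*)$, and Proposition~\ref{p:leq} gives $\rWordh{r}{h}\subseteq\rWordh{r^*}{h^*}$. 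So $\rWordh{r}{h}$ is the least such set, and it is realized by some arithmetic $D$ by the first paragraph.

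\textbf{Main obstacle.} The delicate points are the bookkeeping in the single-word step and the sign conventions. For negative $n$, gcds and $\numer(w)$ may be negative, and I must make sure the reduced-fraction argument ($h=\denom(w)t$ with $t>0$) respects the sign convention fixed after \eqref{e:def h es a}. I also need to confirm that membership of $\fraction(w)$ in $I_n$, including the endpoint convention of Remark~\ref{r:repetend}, is automatic. The remaining steps are direct applications of Proposition~\ref{p:leq} and Theorem~\ref{t:lat}.
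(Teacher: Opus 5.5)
Your proposal is correct and takes essentially the same route as the paper. Both arguments reduce to the sets $\rWordh{r}{h}$ via Corollary~\ref{c:WD=rWH}, transfer inclusion to the order of $\latn$ via Proposition~\ref{p:leq}, and take the join supplied by Theorem~\ref{t:lat}. Beyond that, you explicitly prove two points the paper only asserts in the text preceding the theorem: that $(\rem(w),\denom(w))$ is the least pair whose word set contains $w$, and that every pair $(r,h)$ is realized by some arithmetic $D$.
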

\begin{proof}
    According to Corollary~\ref{c:WD=rWH}, we need to prove that there is a least set of the form $\rWordh{r}{h}$ that contains $W$.
    By Proposition~\ref{p:leq}, the ordering of the sets $\rWordh{r}{h}$ is isomorphic to the ordering of $\latn$, and Theorem~\ref{t:lat} shows that any finite subset of $\latn$ has a least upper bound.
\end{proof}

\begin{remark}\label{rem:closureoperator}
    Let us consider the following family of sets of words:
    \[
    \{ \WordsetD{D} : D \text{ is an arithmetic sequence}\} \cup \{ \emptyset, \mathcal{W} \}.
    \]
    This is a closure system on $\mathcal{W}$, and the set $\cl(W)$ appearing in the corollary above is the closure of $W$ whenever $W$ is a nonempty finite set of primitive words.
    (If $W \subseteq \mathcal{W}$ is an infinite set of words, then the closure of $W$ is $\mathcal{W}$, but this is not relevant for one-dimensional strong affine representations of $\mathcal{P}_{\abs{n}}$.)
    Observe that according to \eqref{e:def h es a} and \eqref{e:def r}, we have $\cl(\{w\}) = \rWordh{\rem(w)}{\denom(w)}$.
\end{remark}

We summarize below how one can compute the closure of a given finite set of primitive words $W=\{w_1,w_2,\dots,w_k\}$ along the lines of the proof of Theorem~\ref{t:lat}, and then we also present some concrete examples for this computation.
\begin{description}\addtolength{\itemsep}{2mm}
    \item[Step 1.] Compute the fractions $\fraction(w_i)$ by Definition~\ref{d:qw}.
    \item[Step 2.] Reduce these fractions and use \eqref{e:def h es a} and \eqref{e:def r} to obtain the numbers $h_i=\denom(w_i)$, $a_i=\numer(w_i)$ and $r_i=\rem(w_i)$.
    \item[Step 2.] Compute $h'=\lcm(h_1,h_2,\dots,h_k)$.
    \item[Step 3.] Compute $r_i'\equiv r_i\frac{h'}{h_i}\pmod{n-1}$.
    \item[Step 4.] Compute $m'$ from \eqref{e:m'}.
    \item[Step 5.] Compute $r\equiv r_1'm'\pmod{n-1}$ and $h=h'm'$; this gives the join $(r,h) = (r_1,h_1) \vee (r_2,h_2) \vee \dots \vee (r_k,h_k)$. 
    \item[Step 6.] Compute $\rQuoth{r}{h}$ by Definition~\ref{d:QrhWrh}.
    \item[Step 7.] Finally, $\cl(W) = \rWordh{r}{h}$ consists of the reverses of the repetends of the $n$-ary expansions of the fractions in $\rQuoth{r}{h}$.
    \end{description}


\subsection{Examples for computing the closure}

We present some concrete numerical examples for computing the closure of a finite set of primitive words using the method outlined above.

\begin{example}
    Let us find the closure of the set $W_1 = \{42,312400\}$ for $n=5$.
    First we compute the fractions $\fraction(w_i)$:
    \begin{align*}
        \wval\big(\overleftarrow{w_1}\big) &= (24)_5=14,\\[2mm]
        \wval\big(\overleftarrow{w_2}\big) &= (004213)_5=558,\\[2mm]
        \fraction(w_1) &= \frac{14}{5^2-1} = \frac{14}{6 \cdot 4} = \frac{7}{3 \cdot 4},\\[2mm]
        \fraction(w_2) &= \frac{558}{5^6-1} = \frac{558}{3906 \cdot 4}  = \frac{1}{7 \cdot 4}.
    \end{align*}
    The corresponding numbers $\denom(w)$, $\numer(w)$ and $\rem(w)$ are the following:
    \[
    \begin{array}{lll}
        h_1:=\denom(w_1)=3, &\numer(w_1)=7, &r_1:=\rem(w_1)=3; \\ 
        h_2:=\denom(w_2)=7, &\numer(w_2)=1,  &r_2:=\rem(w_2)=1.
    \end{array}
    \]
    We have $h'=\lcm(h_1,h_2)=21$ and
    \begin{align*}
    r'_1 &\equiv r_1 \cdot \frac{h'}{h_1} \equiv 3 \cdot \frac{21}{3} \equiv 1\pmod{4},\\
    r'_2 &\equiv r_2 \cdot \frac{h'}{h_2} \equiv 1 \cdot \frac{21}{7} \equiv 3\pmod{4},\\
    m' &=\frac{n-1}{\gcd(n-1,r'_2-r'_1)}=\frac{4}{\gcd(4,2)}=2,\\
    r &\equiv r'_1 \cdot m' \equiv 1 \cdot 2 \equiv 2\pmod{4},\\
    h &= h' \cdot m' = 21 \cdot 2 = 42.
    \end{align*}
Therefore, the closure of $W_1$ is determined by the join $(r_1,h_1) \vee (r_2,h_2) = (r,h) = (2,42)$:
    \begin{align*}
        \rQuoth{2}{42} &= \left\{ \frac{a}{42 \cdot 4} \mathrel{}\middle|\mathrel{} 0 \leq a \leq 168 \text{ and } a \equiv 2\pmod{4} \right\}\\ 
        &= \left\{ \frac{2}{168}, \frac{6}{168}, \dots, \frac{166}{168} \right\}  = \left\{ \frac{1}{84}, \frac{3}{84}, \dots, \frac{83}{84} \right\}.  
    \end{align*}    
The closure $\cl(W_1)$ consists of the reverses of the repetends of the $5$-ary expansions of these fractions:
{
\everymath={\displaystyle}
\[
\begin{array}{r@{\ }lcr@{\ }lcr@{\ }l}
\frac{1}{84}  &= (0.001221\dots)_5 & \quad & \frac{29}{84} &= (0.133034\dots)_5 & \quad & \frac{57}{84} &= (0.314402\dots)_5 \\[4mm]
\frac{3}{84}  &= (0.004213\dots)_5 & \quad & \frac{31}{84} &= (0.141031\dots)_5 & \quad & \frac{59}{84} &= (0.322344\dots)_5 \\[4mm]
\frac{5}{84}  &= (0.012210\dots)_5 & \quad & \frac{33}{84} &= (0.144023\dots)_5 & \quad & \frac{61}{84} &= (0.330341\dots)_5 \\[4mm]
\frac{7}{84}  &= (0.02    \dots)_5 & \quad & \frac{35}{84} &= (0.20    \dots)_5 & \quad & \frac{63}{84} &= (0.3     \dots)_5 \\[4mm]
\frac{9}{84}  &= (0.023144\dots)_5 & \quad & \frac{37}{84} &= (0.210012\dots)_5 & \quad & \frac{65}{84} &= (0.341330\dots)_5 \\[4mm]
\frac{11}{84} &= (0.031141\dots)_5 & \quad & \frac{39}{84} &= (0.213004\dots)_5 & \quad & \frac{67}{84} &= (0.344322\dots)_5 \\[4mm]
\frac{13}{84} &= (0.034133\dots)_5 & \quad & \frac{41}{84} &= (0.221001\dots)_5 & \quad & \frac{69}{84} &= (0.402314\dots)_5 \\[4mm]
\frac{15}{84} &= (0.042130\dots)_5 & \quad & \frac{43}{84} &= (0.223443\dots)_5 & \quad & \frac{71}{84} &= (0.410311\dots)_5 \\[4mm]
\frac{17}{84} &= (0.100122\dots)_5 & \quad & \frac{45}{84} &= (0.231440\dots)_5 & \quad & \frac{73}{84} &= (0.413303\dots)_5 \\[4mm]
\frac{19}{84} &= (0.103114\dots)_5 & \quad & \frac{47}{84} &= (0.234432\dots)_5 & \quad & \frac{75}{84} &= (0.421300\dots)_5 \\[4mm]
\frac{21}{84} &= (0.1     \dots)_5 & \quad & \frac{49}{84} &= (0.24    \dots)_5 & \quad & \frac{77}{84} &= (0.42    \dots)_5 \\[4mm]
\frac{23}{84} &= (0.114103\dots)_5 & \quad & \frac{51}{84} &= (0.300421\dots)_5 & \quad & \frac{79}{84} &= (0.432234\dots)_5 \\[4mm]
\frac{25}{84} &= (0.122100\dots)_5 & \quad & \frac{53}{84} &= (0.303413\dots)_5 & \quad & \frac{81}{84} &= (0.440231\dots)_5 \\[4mm]
\frac{27}{84} &= (0.130042\dots)_5 & \quad & \frac{55}{84} &= (0.311410\dots)_5 & \quad & \frac{83}{84} &= (0.443223\dots)_5
\end{array}
\]
}
Thus $\cl(W_1)$ contains $42$ words; to obtain a better overview of these words, we list below them only up to cyclic shifts (recall from Remark~\ref{r:rWh cyclic shifts} that if a word appears in a representation of $\mathcal{P}_5$, then all of its cyclic shifts also appear):
\[
1,\; 3,\; 02,\; 24,\; 001221,\; 003124,\; 044132,\; 014113,\;  033143,\; 223443.
\]
\end{example}

\begin{example}
    Let us now find the closure of the set $W_2 = \{42,312400\}$ for $n=-5$.
    (This is the same set $W_1$ as in the previous example, but we use a different notation to indicate that we consider $n=-5$ here, thus the closure operator is different.)
    First we compute the fractions $\fraction(w_i)$:
    \begin{align*}      
        \wval\big(\overleftarrow{w_1}\big) &= (24)_{-5}=-6,\\[2mm]
        \wval\big(\overleftarrow{w_2}\big) &= (004213)_{-5}=-452,\\[2mm]
        \fraction(w_1) &= \frac{-6}{(-5)^2-1} = \frac{-6}{-4\cdot (-6)} = \frac{3}{2 \cdot (-6)},\\[2mm]
        \fraction(w_2) &= \frac{-452}{(-5)^6-1} = \frac{-452}{-2604 \cdot (-6)}  = \frac{113}{651 \cdot (-6)}.
    \end{align*}
    The corresponding numbers $\denom(w)$, $\numer(w)$ and $\rem(w)$ are the following:
    \[
    \begin{array}{lll}
        h_1:=\denom(w_1)=2, &\numer(w_1)=3, &r_1:=\rem(w_1)=3; \\ 
        h_2:=\denom(w_2)=651, &\numer(w_2)=113,  &r_2:=\rem(w_2)=5.
    \end{array}
    \]
    We have $h'=\lcm(h_1,h_2)=1302$ and
    \begin{align*}
    r'_1 &\equiv r_1 \cdot \frac{h'}{h_1} \equiv 3 \cdot \frac{1302}{2} \equiv 3\pmod{-6},\\
    r'_2 &\equiv r_2 \cdot \frac{h'}{h_2} \equiv 5 \cdot \frac{1302}{651} \equiv 4\pmod{-6},\\
    m' &=\frac{n-1}{\gcd(n-1,r'_2-r'_1)}=\frac{-6}{\gcd(-6,1)}=6,\\
    r &\equiv r'_1 \cdot m' \equiv 3 \cdot 6 \equiv 0\pmod{-6},\\
    h &= h' \cdot m' = 1302 \cdot 6 = 7812.
    \end{align*}
Therefore, the closure of $W_2$ is determined by the join $(r_1,h_1) \vee (r_2,h_2) = (r,h) = (0,7812)$:
    \begin{align*}
        \rQuoth{0}{7812} &= \left\{ \frac{a}{7812 \cdot (-6)} \mathrel{}\middle|\mathrel{} -7812 \leq a \leq 39060 \text{ and } a \equiv 0\pmod{-6} \right\}\\ 
        &= \left\{ \frac{-39060}{46872}, \frac{-39054}{46872}, \dots, \frac{7812}{46872} \right\}  = \left\{ \frac{-6510}{7812}, \frac{-6509}{7812}, \dots, \frac{1302}{7812} \right\}.
    \end{align*}    
The closure $\cl(W_2)=\rWordh{0}{7812}$ consists of the reverses of the repetends of the $(-5)$-ary expansions of these fractions.
Thus $\cl(W_2)$ contains $7813$ words, and these yield $1329$ words up to cyclic shifts. Categorizing the elements of the closure by length, we get
\begin{itemize}
    \item 7680 words of length 6 (1280 words up to cyclic shifts),
    \item 120 words of length 3 (40 words up to cyclic shifts),
    \item 8 words of length 2 (4 words up to cyclic shifts) and
    \item 5 words of length 1 (all of the one-letter words over $\{0,1,2,3,4\}$).
\end{itemize}
Note that $\cl(W_1)\subseteq\cl(W_2)$, since if $w$ is the reverse of an element of $\cl(W_1)$, then $\fraction(w) \in \rQuoth{0}{7812}$ in base $n=-5$ (it suffices to verify this for one representative of each conjugacy class):
\begin{align*}      
        \fraction(1) &= \frac{\wval(1)}{(-5)^1-1} = \frac{1}{-6} = \frac{-1302}{7812},\\[2mm]
        \fraction(3) &= \frac{\wval(3)}{(-5)^1-1} = \frac{3}{-6} = \frac{-3906}{7812},\\[2mm]
        \fraction(20) &= \frac{\wval(20)}{(-5)^2-1} = \frac{-10}{24} = \frac{-3255}{7812},\\[2mm]
        \fraction(42) &= \frac{\wval(42)}{(-5)^2-1} = \frac{-18}{24} = \frac{-5859}{7812},\\[2mm]
        \fraction(122100) &= \frac{\wval(122100)}{(-5)^6-1} = \frac{-2100}{15624} = \frac{-1050}{7812},\\[2mm]
        \fraction(421300) &= \frac{\wval(421300)}{(-5)^6-1} = \frac{-11300}{15624} = \frac{-5650}{7812},\\[2mm]
        \fraction(231440) &= \frac{\wval(231440)}{(-5)^6-1} = \frac{-4420}{15624} = \frac{-2210}{7812},\\[2mm]
        \fraction(311410) &= \frac{\wval(311410)}{(-5)^6-1} = \frac{-8780}{15624} = \frac{-4390}{7812},\\[2mm]
        \fraction(341330) &= \frac{\wval(341330)}{(-5)^6-1} = \frac{-6940}{15624} = \frac{-3470}{7812},\\[2mm]
        \fraction(344322) &= \frac{\wval(344322)}{(-5)^6-1} = \frac{-7308}{15624} = \frac{-3654}{7812}.
    \end{align*}
\end{example}
As the next example shows, the closures of the same set of words with $n=5$ and $n=-5$ are not comparable in general. 

\begin{example}
    The closure of the set $\{220\}$ for $n=5$ consists of the reverses of the repetends of the 5-ary expansions of the fractions in 
    \[
    \rQuoth{0}{31}=\left\{\frac{0}{124},\frac{4}{124},\dots,\frac{124}{124}\right\}=\left\{\frac{0}{31},\frac{1}{31},\dots,\frac{31}{31}\right\}.
    \]
    This closure contains 2 one-letter, and 30 three-letter words (10 three-letter words up to conjugacy). Listing one representative from each conjugacy class, we get
    \[
    0,\,4,\,004,\,013,\,022,\,031,\,044,\,112,\,134,\,143,\,224,\,233.
    \]

    Now calculating the closure of the same set for $n=-5$, we get the set of fractions
    \[
    \rQuoth{4}{21}=\left\{\frac{-100}{126},\frac{-94}{126},\dots,\frac{20}{126}\right\}.
    \]
    The closure contains 21 three-letter words (7 three-letter words up to conjugacy). We can once again list a representative from each conjugacy class:
    \[
    004,\,013,\,022,\,031,\,112,\,244,\,334.
    \]
    The first closure contains 2 one-letter words, while the second contains none. On the other hand, the first closure does not contain the word $334$, which is an element of the second closure. Thus the two closures are incomparable.
\end{example}

\begin{remark}\label{rem:ureskrumpli}
    Given a pair $(r,h)$, there exists a word $w$ with $\rem(w)=r$ and $\denom(w)=h$ if and only if there is an integer $a$ such that $a \equiv r\pmod{n-1}$ and $\gcd(a,h)=1$. This is clear from the definition of the set $\rQuoth{r}{h}$, and the reader may wish to verify that these conditions are equivalent to $\gcd(r,h,n-1)=1$.
\end{remark}

\section{The special case $d_0=0$}\label{sec:d_0=0}
Let us consider those one-dimensional strong affine representations of $\mathcal{P}_{\abs{n}}$ that correspond to arithmetic sequences starting with $0$.
Thus we have $d_0=0$ and $D=\{0,h,2h,\dots,(\abs{n}-1)h\}$, where the difference $h \in \N$ is relatively prime to $n$. 
Recall that $r$ denotes the residue of $-d_0$ modulo $\abs{n-1}$, i.e., we have $r=0$ throughout this section. 

From the poset $\latn$ defined in Definition~\ref{def:latn} we only need the subposet $\{0\}\times\{h\in\N\colon\gcd(h,n)=1\}$.
It is easy to see that this set is just the principal filter ${\uparrow}(0,1) \coloneqq \{ (r,h) : (0,1) \leq (r,h)\}$; moreover, it is isomorphic to the poset of the sets $\rQuoth{0}{h}$, or, equivalently, to the poset of the sets $\rWordh{0}{h}$ ordered by inclusion:
\begin{equation}\label{e:order0}
    \rWordh{0}{h_1} \subseteq \rWordh{0}{h_2} \iff (0,h_1)\leq(0,h_2)\iff h_1\mid h_2.
\end{equation}
Thus ${\uparrow}(0,1)$ is isomorphic to the sublattice of the divisibility lattice of positive integers relatively prime to $n$.
In particular, the lattice operations are
\[(0,h_1)\wedge(0,h_2)=(0,\gcd(h_1,h_2))\qquad\text{and}\qquad(0,h_1)\vee(0,h_2)=(0,\lcm(h_1,h_2)).\]

Given a finite set $W$ of primitive words over the alphabet $\Sigma$, we are now interested in the least set of the form $\rWordh{0}{h}$ containing $W$.
This defines a closure operator $\clz$ analogous to $\cl$ (cf. Remark~\ref{rem:closureoperator}), and we have $\clz(W) \supseteq \cl(W)$ for all finite sets of primitive words $W \subseteq \mathcal{W}$.
To find $\clz(W)$, we need to determine the least set $\rQuoth{0}{h}$ such that each member of $W$ occurs among the reverses of the repetends of the fractions in $\rQuoth{0}{h}$.
We can simplify the conditions in Definition~\ref{d:QrhWrh} for the fractions in $\rQuoth{0}{h}$ in the following way:
\begin{align*}    
\rQuoth{0}{h}&=\left\{q\in I_{n} \ \middle|\   \exists a\in\Z\colon q=\frac{a}{h(n-1)}\text{ and } a\equiv 0\pmod{n-1}\right\}\\[2mm]
&=\left\{q\in I_{n} \ \middle|\  \exists b\in\Z\colon  q=\frac{b}{h}\right\}.
\end{align*}
Let $w\in \Sigma^{\ell}$ be a primitive word, and let us consider the corresponding fraction $\fraction(w)=\wval\big(\overleftarrow{w}\big)/(n^{\ell}-1)$.
We have $\fraction(w) \in \rQuoth{0}{h}$ if and only if the denominator of the reduced form of $\fraction(w)$ divides $h$:
\begin{equation}\label{e:Q0h}
    \fraction(w)\in\rQuoth{0}{h}\iff \left.\frac{ n^\ell-1}{\gcd\big( n^\ell-1,\wval\big(\overleftarrow{w}\big)\big)}\mathrel{}\right|\mathrel{}h.
\end{equation}

\begin{definition}
    For a primitive word $w\in \Sigma^{\ell}$, let
     \[
     \denom_0( w)=\frac{ n^\ell-1}{\gcd\big( n^\ell-1,\wval\big(\overleftarrow{w}\big)\big)}.
     \]    
\end{definition}

From \eqref{e:Q0h} we can immediately deduce the following formula for the closures of finite sets of primitive words:
\begin{equation}\label{e:cl0w}
    \clz(W) = \rWordh{0}{\lcm\{\denom_0(w) : w \in W\}}.
\end{equation}

The following theorem states that every nonempty finite set of words that is closed with respect to the closure operator $\clz$ (i.e., every closed set except for $\emptyset$ and $\mathcal{W}$) can be obtained as the closure of a single word.

\begin{theorem}\label{t:one-closure}
    For every positive integer $h$ that is relatively prime to $n$, there is a word $w \in \mathcal{W}$ such that $\clz(\{w\})=\rWordh{0}{h}$.
\end{theorem}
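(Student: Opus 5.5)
By \eqref{e:cl0w}, $\clz(\{w\})=\rWordh{0}{\denom_0(w)}$. By \eqref{e:order0}, distinct $h$ give distinct sets $\rWordh{0}{h}$. So it suffices to find, for each positive integer $h$ with $\gcd(h,n)=1$, a primitive word $w$ with $\denom_0(w)=h$. Equivalently, we want $w$ such that the reduced denominator of $\fraction(w)$ is exactly $h$.

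The plan is to pick a suitable fraction $b/h\in I_n$ and let $w$ be the reverse of its repetend. This requires four checks.

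\textbf{Choice of the fraction.} I would take $b$ with $\gcd(b,h)=1$ and $b/h\in I_n$. Since $I_n$ is a closed interval of length $1$, it contains $h+1$ consecutive multiples of $1/h$, so among the $h$ consecutive integers $b$ available, some $b$ is coprime to $h$. For $h=1$ we simply take $b/1\in I_n\cap\Z$, which exists because the interval has length $1$.

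\textbf{Periodicity and primitivity.} Since $\gcd(h,n)=1$, Proposition~\ref{p:rationalperiodic} gives that $b/h$ has a purely periodic expansion. By Remark~\ref{r:repetend}, its repetend is well defined and primitive. Let $w$ be the reverse of the repetend, of length $\ell$; then $w$ is a primitive word.

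\textbf{Recovering the fraction.} We have $\fraction(w)=(0.\overline{c_1\dots c_\ell})_n$, the purely periodic expansion with the same repetend. Hence $\fraction(w)=b/h$, and the reduced denominator is $h$. Formally, $\denom_0(w)$ is the reduced denominator of $\wval(\overleftarrow{w})/(n^\ell-1)=b/h$, taken positive, which equals $h$ because $\gcd(b,h)=1$.

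\textbf{The main obstacle: endpoints of $I_n$.} The delicate point is that $b/h$ and $(0.\overline{\text{repetend}})_n$ must agree. Two things could go wrong. First, the chosen expansion could fail to be of the form $(0.c_{-1}\dots)_n$; the convention in Remark~\ref{r:repetend} guarantees that it is. Second, $b/h$ could be an endpoint of $I_n$ in the negative-base case, where Proposition~\ref{p:ordhn is fraction rep length} fails. I would check directly that the endpoints are handled correctly. For $n\le -2$ the endpoints are $\frac{-n}{n-1}$ and $\frac{-1}{n-1}$, with expansions $(0.\overline{d0})_n$ and $(0.\overline{0d})_n$. Their repetends $d0$ and $0d$ give $\fraction$ values equal to the same endpoints, so the identity $\fraction(w)=b/h$ still holds. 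Alternatively, when $h>1$ one can avoid the endpoints altogether, since there is enough room to pick an interior $b$.

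We do not need the repetend length to equal $\ord_h(n)$. All that matters is that the purely periodic expansion reproduces the number, which holds regardless of that length.
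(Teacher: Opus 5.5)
Your proof is correct and follows essentially the same route as the paper's: choose $b$ coprime to $h$ with $b/h\in I_n$, let $w$ be the reverse of the repetend of $b/h$, observe that $\fraction(w)=b/h$ is already reduced so $\denom_0(w)=h$, and conclude via \eqref{e:cl0w}. One small slip: a closed interval of length $1$ is only guaranteed to contain $h$ consecutive multiples of $1/h$, not $h+1$, but $h$ is all you actually use. Your explicit endpoint check spells out what the paper's proof takes for granted.
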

\begin{proof}
      We have seen that $\rQuoth{0}{h} = \{b/h : b \in \Z \text{ and } b/h \in I_n\}$.
      Since $I_n$ is of length one, there exist $h$ consecutive values of $b$ for which $b/h \in \rQuoth{0}{h}$.
      Among these numbers, there is at least one that is relatively prime to $h$.
      Let $b$ be such a value, i.e., $\gcd(b,h)=1$ and $b/h \in I_n$.
      Let $w$ be the reverse of the repetend of $b/h$, then we have $\fraction(w)=b/h$; moreover, this fraction is in reduced form.
      Since $\denom_0(w)$ is defined as the denominator of the reduced form of $\fraction(w)$, we have $h = \denom_0(w)$.
      From \eqref{e:cl0w} we can conclude that $\clz(\{w\}) = \rWordh{0}{\denom_0(w)}  = \rWordh{0}{h}$.
\end{proof}

We use the observations above to define a quasiorder on the set of primitve words such that the sets $\rWordh{0}{h}$ are exactly the principal ideals of this quasiordered set.

\begin{definition}
    Let $\lesssim$ be the binary relation on $\mathcal{W}$ defined by 
    $$ w_1\lesssim w_2\iff\denom_0( w_1)\mid\denom_0( w_2).$$
\end{definition}

\begin{remark}\label{r:lesssim equivalence}
    Observe that $\lesssim$ is a quasiorder on $\mathcal{W}$ and the equivalence relation corresponding to $\lesssim$ can be described as follows:
    $$ w_1 \sim  w_2 \iff w_1\lesssim w_2\text{ and } w_2\lesssim w_1\iff\denom_0( w_1)=\denom_0( w_2).$$
\end{remark}

\begin{definition}
    Let $(A,\lesssim)$ be a quasiordered set, and let $I\subseteq A$. Then $I$ is an \emph{ideal} of $(A,\lesssim)$ if the following hold:
    \begin{enumerate}[(1)]
        \item the set $I$ is not empty;
        \item for every $a\in I$ and $b\in A$, if $b\lesssim a$ then $b\in I$; 
        \item for every $a,b\in I$ there is $c\in I$ such that $a\lesssim c$ and $b\lesssim c$.
    \end{enumerate}
    For every $a\in A$, the set ${\downarrow} a=\{a'\colon a'\lesssim a\}$ is an ideal, called the \emph{principal ideal} generated by $a.$
\end{definition}
\begin{theorem}\label{t:principalideals}
    For any $W\subseteq\mathcal{W}$, there is a positive integer $h$ such that $\rWordh{0}{h}=W$ if and only if $W$ is a principal ideal in $(\mathcal{W};\lesssim)$.
    Consequently, the closed sets with respect to $\clz$ are exactly the principal ideals of $(\mathcal{W};\lesssim)$ together with $\emptyset$ and $\mathcal{W}$.

\end{theorem}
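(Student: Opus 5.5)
The key observation is that membership in $\rWordh{0}{h}$ is expressed entirely through $\denom_0$. By \eqref{e:Q0h}, a primitive word $w$ lies in $\rWordh{0}{h}$ if and only if $\denom_0(w)\mid h$. So I will prove the following identity for every $h$ relatively prime to $n$:
\[
\rWordh{0}{h} = \{w\in\mathcal{W} : \denom_0(w)\mid h\} = {\downarrow} w_h ,
\]
where $w_h$ is a word with $\denom_0(w_h)=h$. Such a word exists by the proof of Theorem~\ref{t:one-closure}. The first equality is exactly \eqref{e:Q0h}. The second follows from the definition of $\lesssim$: $w\lesssim w_h$ means $\denom_0(w)\mid\denom_0(w_h)=h$. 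This proves the ``only if'' direction: every set $\rWordh{0}{h}$ is a principal ideal.

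For the converse, let $W={\downarrow} v$ for some $v\in\mathcal{W}$ and set $h:=\denom_0(v)$. Then $W=\{w : \denom_0(w)\mid h\}$, which by \eqref{e:Q0h} equals $\rWordh{0}{h}$, provided $h$ is relatively prime to $n$ (as required in Definition~\ref{d:QrhWrh}). This is the one point needing care. Since $\denom_0(v)$ divides $n^\ell-1$, where $\ell=\abs{v}$, and $\gcd(n^\ell-1,n)=1$, we get $\gcd(h,n)=1$. I should also confirm that the $I_n$ condition in $\rQuoth{0}{h}$ causes no trouble. It does not, because $\fraction(w)\in I_n$ always holds (noted after Definition~\ref{d:qw}).

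For the ``consequently'' part, I will recall from the discussion before \eqref{e:cl0w} and from Remark~\ref{rem:closureoperator} (adapted to $\clz$) that the $\clz$-closed sets are the sets $\rWordh{0}{h}$ together with $\emptyset$ and $\mathcal{W}$. The first part identifies the former with the principal ideals. The main obstacle is mostly bookkeeping: checking that $\clz$-closedness is exactly this family, since $\clz(W)$ is defined only for finite $W$ and infinite sets close to $\mathcal{W}$. I will address this in one sentence by noting that each $\rWordh{0}{h}$ is finite: its elements correspond to the finitely many fractions $b/h$ in the bounded interval $I_n$. Hence every nonempty finite closed set equals $\clz$ of itself, which is of the form $\rWordh{0}{h}$ by \eqref{e:cl0w}.
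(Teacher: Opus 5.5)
Your proposal is correct and is essentially the paper's argument. The paper pivots on the identity $\clz(\{w\})={\downarrow}w$ (read off from \eqref{e:Q0h} and \eqref{e:cl0w}) and invokes Theorem~\ref{t:one-closure} for the forward direction; you pivot on the equivalent description $\rWordh{0}{h}=\{w\in\mathcal{W}:\denom_0(w)\mid h\}$ and use the word with $\denom_0(w)=h$ constructed in that theorem's proof, and your checks that $\gcd(\denom_0(v),n)=1$ and that each $\rWordh{0}{h}$ is finite supply details the paper leaves implicit (just take $h=\abs{\denom_0(v)}$, since $n^{\abs{v}}-1<0$ when $n\le -2$ and $\abs{v}$ is odd).
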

\begin{proof}
    From \eqref{e:Q0h} and \eqref{e:cl0w} it follows immediately that 
    \begin{equation}\label{e:down(w)=cl0(w)}
        \text{for every } w\in\mathcal{W}, \text{ we have } \clz(\{w\}) = {\downarrow} w \text{ in } (\mathcal{W}; \lesssim).
    \end{equation}
    If $W=\rWordh{0}{h}$ for some positive integer $h$, then Theorem~\ref{t:one-closure} guarantees the existence of a word $w$ such that $\clz(\{w\})=\rWordh{0}{h}$ for some word $w$; then \eqref{e:down(w)=cl0(w)} implies that $W={\downarrow} w$.
    
    Conversely, assume that $W$ is a principal ideal, i.e., $W = {\downarrow} w$ for some $w \in \mathcal{W}$. By \eqref{e:down(w)=cl0(w)}, we have $W = {\downarrow} w = \clz(\{w\})$, and \eqref{e:cl0w} shows that $\clz(\{w\}) = \rWordh{0}{h}$ with $h=\denom_0(w)$.
\end{proof}
\begin{remark}
    Each equivalence class with respect to $\sim$ is finite, and the induced poset $(\mathcal{W}/\sim;\leq)$ is isomorphic to a sublattice of $(\N;\mid)$.
    It is clear that in the latter lattice principal ideals are the same as finite ideals.
    Thus the same is true for $(\mathcal{W}/\sim;\leq)$, hence Theorem~\ref{t:principalideals} could be reformulated as follows:
    if $W \subseteq \mathcal{W}$ is a \emph{finite} set of words, then there is a positive integer $h$ such that $\rWordh{0}{h}=W$ if and only if $W$ is an ideal in $(\mathcal{W};\lesssim)$.
\end{remark}
\begin{remark}
    Considering the general case when $d_0$ is not assumed to be zero, it is possible to construct a similar binary relation $\lessapprox$ on $\mathcal{W}$ with the help of the poset $(\latn;\leq)$ and the functions $\denom(w)$ and $\rem(w)$:
    \[w_1\lessapprox w_2\iff \denom(w_1)\mid\denom(w_2)\text{ and }\rem(w_1)\cdot\frac{\denom(w_2)}{\denom(w_1)}\equiv\rem(w_2)\pmod{n-1}.\]
    By Proposition~\ref{p:leq} and Remark~\ref{rem:closureoperator}, we have $w_1\lessapprox w_2$ if and only if $\cl(\{w_1\}) \subseteq \cl(\{w_2\})$.
    It is then easy to see that ${\downarrow}w=\cl(\{w\})=\rWordh{\rem(w)}{\denom(w)}$ for every word $w\in\mathcal{W}$. 
    Thus the quasiorder $\lessapprox$ captures the closures of one-element sets.
    As we have seen in Theorem~\ref{t:one-closure}, in the case of the closure operator $\clz$ corresponding to arithmetic sequences starting with zero, all closed sets except for $\emptyset$ and $\mathcal{W}$ are of this form.
    However, this is not the case for the closure operator $\cl$ (i.e., for arbitrary arithmetic sequences): as the following example shows, not every set $\rWordh{r}{h}$ is a principal ideal in $(\mathcal{W};\lessapprox)$ if $n\neq 2$.
\end{remark}

\begin{example}
    By Remark~\ref{rem:ureskrumpli}, there is no word $w \in \mathcal{W}$ with $\denom(w)=n-1$ and $\rem(w)=0$ if $n \neq 2$.
    Let us verify without using Remark~\ref{rem:ureskrumpli} that $\rWordh{0}{n-1}$ is not the closure of any of its elements.
    We will use the notation $d=\abs{n}-1$ from Section~\ref{sec:expansions}.
    
    If $n > 2$, then $\rQuoth{0}{n-1}$ consists of the fractions $\frac{0}{n-1},\dots,\frac{n-1}{n-1}$ (see Example~\ref{ex:Q(0,n^l-1)} with $\ell=1$), and the corresponding words are $0,\dots,d$ (all words of length $1$ over $\Sigma$).
    The closures of these words are the following: $\cl(\{0\})=\cl(\{d\})=\{0,d\}$ and $\cl(\{c\})=\{c\}$ for $c=1,\dots,n-2$.
    
    If $n \leq -2$, then $\rQuoth{0}{n-1}$ consists of the fractions $\frac{-n}{n-1},\dots,\frac{-1}{n-1}$, and the corresponding words are $d0,d,\dots,1,0,0d$ (all words of length $1$ over $\Sigma$ and the words $d0$ and $0d$, which are conjugates of each other).
    The closures of these words are the following: $\cl(\{d0\})=\cl(\{0d\})=\{d0,0d\}$ and $\cl(\{c\})=\{c\}$ for $c=d,\dots,0$.
\end{example}

We conclude this section by discussing some results on the lengths of words in the equivalence classes of $(\mathcal{W};\sim)$.

\begin{theorem} Let $n$ be an integer with $\abs n\geq 2$.
    \begin{enumerate}[(i)]
        \item If $n\geq 2$ and $w\in\mathcal{W}$, then $\abs w=\ord_{\denom_0(w)}(n)$.
        \item If $n\leq -2$ and $w\in\mathcal{W}\setminus\{0d,d0\}$, then $\abs w=\ord_{\denom_0(w)}(n)$.
    \end{enumerate}
\end{theorem}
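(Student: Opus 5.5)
The plan is to reduce the statement to Proposition~\ref{p:ordhn is fraction rep length}, applied to the reduced form of $\fraction(w)$. Let $w\in\mathcal{W}$ have length $\ell$. By Definition~\ref{d:qw}, $\fraction(w)=\wval\big(\overleftarrow{w}\big)/(n^\ell-1)$ lies in $I_n$, and its repetend is exactly $\overleftarrow{w}$. The repetend is primitive of length $\ell$ because $w$, and hence $\overleftarrow{w}$, is primitive. Now write $\fraction(w)=a/h$ in reduced form with $h>0$. By the definition of $\denom_0$, this denominator is $h=\denom_0(w)$; if $n^\ell-1<0$ we fix signs so that $h$ is positive. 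Since $h$ divides $n^\ell-1$, it is coprime to $n$. Moreover $\gcd(a,h)=1$ and $a/h\in I_n$. So all hypotheses of Proposition~\ref{p:ordhn is fraction rep length} hold.

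For $n\ge 2$, part (i) of that proposition says the repetend of $a/h$ has length $\ord_h(n)$. This length equals $\abs{w}=\ell$, which proves (i). One point needs care: $a/h=1$ must use the expansion $(0.dddd\dots)_n$, so that $w=d$ and $h=1$, and $\ord_1(n)=1$. This agrees with the repetend convention of Remark~\ref{r:repetend}, so the boundary point causes no trouble.

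For $n\le -2$, part (ii) gives the same conclusion, provided $\fraction(w)$ is not an endpoint of $I_n$. The one real step is therefore to show that $\fraction(w)\in\{\frac{-n}{n-1},\frac{-1}{n-1}\}$ happens only when $w\in\{0d,d0\}$. By Example~\ref{ex:endpoints} and the convention of Remark~\ref{r:repetend}, the chosen expansions are $\frac{-1}{n-1}=(0.0d0d\dots)_n$ and $\frac{-n}{n-1}=(0.d0d0\dots)_n$. Their repetends are $0d$ and $d0$. The repetend of $\fraction(w)$ is $\overleftarrow{w}$, and every word has a unique repetend. Hence $\fraction(w)$ is an endpoint exactly when $\overleftarrow{w}\in\{0d,d0\}$, that is, when $w\in\{d0,0d\}$. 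These words are excluded by hypothesis, so part (ii) applies and gives $\abs{w}=\ord_{\denom_0(w)}(n)$.

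I expect the main obstacle to be bookkeeping rather than substance. First, I must confirm that the repetend of $\fraction(w)$, taken with the convention of Remark~\ref{r:repetend}, really is $\overleftarrow{w}$ at boundary points. Second, for negative $n$, where $n^\ell-1$ may be negative, I must make the sign choice in $\denom_0$ consistent with the positive $h$ required in Proposition~\ref{p:ordhn is fraction rep length}. Once both are settled, the statement follows directly.
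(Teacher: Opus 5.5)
Your proposal is correct and follows the paper's proof: apply Proposition~\ref{p:ordhn is fraction rep length} to the reduced form of $\fraction(w)$. That reduced fraction has denominator $\denom_0(w)$ and repetend $\overleftarrow{w}$, and for $n\le -2$ the endpoints of $I_n$ must be excluded. You also spell out what the paper leaves implicit, namely that $\fraction(w)$ is an endpoint of $I_n$ exactly when $\overleftarrow{w}\in\{0d,d0\}$, and that the sign convention in $\denom_0(w)$ does not affect $\ord_{\denom_0(w)}(n)$.
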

\begin{proof}
    If $w\in \mathcal{W}$ is of length $\ell$, then $\fraction(w)=\frac{\wval\left(\overleftarrow{w}\right)}{n^{\ell}-1}$ is the fraction that has $\overleftarrow{w}$ as its repetend. Reducing this fraction, the denominator becomes $\denom_0(w)$. Proposition~\ref{p:ordhn is fraction rep length} tells us that the repetend of this fraction is of length $\ord_{\denom_0(w)}(n)$ if $n$ is positive or $\overleftarrow{w}\notin\{0d,d0\}$.
    Since $\abs w=\abs{\overleftarrow{w}}$, the theorem is proved.
\end{proof}
\begin{corollary}\label{c:sim class length}
    If $n \geq 2$, then words within the same equivalence class with respect to $\sim$ have the same length.
    The same is true for $n \leq -2$ with the single exception of the equivalence class of $d0$, which consists of the words $d0$, $0d$ together with all one-letter words whose value is relatively prime to $n-1$:
    \begin{equation*}
    \big\{d0,0d\big\} \cup \big\{ c \in \Sigma : \gcd(c,n-1) = 1 \big\}.
    \end{equation*}
\end{corollary}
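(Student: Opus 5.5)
The first claim follows from the preceding theorem. If $w_1\sim w_2$, then $\denom_0(w_1)=\denom_0(w_2)=:h$. Provided neither word is $0d$ or $d0$, both lengths equal $\ord_h(n)$. For $n\geq 2$ there is no exceptional word, so this already proves the statement for positive bases. For $n\leq -2$ it remains to analyse the class of $d0$, i.e., to compute $\denom_0(d0)=\denom_0(0d)$ and find all words with the same value of $\denom_0$.

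\textbf{Step 1: compute $\denom_0(d0)$.} Recall $d=\abs n-1=-n-1$. We have $\fraction(0d)=\wval(d0)/(n^2-1)=dn/(n^2-1)=-n(n+1)/((n-1)(n+1))=-n/(n-1)$. Since $\gcd(n,n-1)=1$, this gives $\denom_0(0d)=\abs{n-1}=1-n$. The conjugate $d0$ has $\fraction(d0)=d/(n^2-1)=-1/(n-1)$, which gives the same denominator. So the equivalence class is $\{w\in\mathcal W:\denom_0(w)=1-n\}$. (These two fractions are exactly the endpoints of $I_n$, consistent with Example~\ref{ex:endpoints}.)

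\textbf{Step 2: words of other lengths in the class.} Let $w\notin\{0d,d0\}$ with $\denom_0(w)=1-n$. By the preceding theorem, $\abs w=\ord_{1-n}(n)$, and since $n\equiv 1\pmod{n-1}$ this order is $1$. Hence $w=c$ is a single letter. Then $\fraction(c)=c/(n-1)$ and $\denom_0(c)=\abs{n-1}/\gcd(c,n-1)$. This equals $1-n$ exactly when $\gcd(c,n-1)=1$, with the convention $\gcd(0,n-1)=\abs{n-1}$, so $c=0$ is excluded because $\abs{n-1}\geq 3$. Conversely, every such one-letter word is in the class. This yields the displayed set.

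\textbf{Step 3: conclude.} Every other equivalence class contains neither $0d$ nor $d0$, so all its words have the common length $\ord_h(n)$. The class of $d0$ contains words of lengths $2$ and $1$, the latter being nonempty since $c=1$ qualifies; so it is indeed a genuine exception. The only delicate point is the sign bookkeeping in Step 1 (the negative numerators and the sign convention for the gcd). It must be checked that $\denom_0$ is taken as a positive number, i.e., $\abs{n-1}$.
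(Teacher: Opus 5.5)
Your proposal is correct and follows the paper's route. The paper states this corollary without a separate proof, as an immediate consequence of the preceding theorem ($\abs{w}=\ord_{\denom_0(w)}(n)$ outside the pair $\{0d,d0\}$). You supply the remaining details the same way: you compute $\denom_0(d0)=\denom_0(0d)=\abs{n-1}$, note that $\ord_{\abs{n-1}}(n)=1$, and identify the one-letter words $c$ with $\gcd(c,n-1)=1$, with the sign convention and the exclusion of $c=0$ handled correctly.
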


\begin{remark}
    It is easy to see that if $\cl(w_1) \subseteq \cl(w_2)$, then $\cl_0(w_1) \subseteq \cl_0(w_2)$. Thus $w_1\lessapprox w_2$ implies $w_1\lesssim w_2$, hence any $\approx$ class is a subset of a $\sim$ class. Moreover, if $n \leq -2$, then $\denom(0d)=\denom(d0)=1$, $\rem(0d)=\rem(d0)=-n$ and $\rWordh{-n}{1}=\{d0,0d\}$, thus $d0$ and $0d$ form a 2-element $\approx$ class. Taking Corollary~\ref{c:sim class length} into account, this means that if $w\approx w'$, then $\abs{w}=\abs{w'}$.
\end{remark}

\section{Concluding remarks and open problems}\label{sec:conclusion}

We have provided a description of the one-dimensional strong affine representations of the polycyclic monoids in the special case when the system of residues inducing the representation is an arithmetic sequence.
Since every pair of numbers is an arithmetic sequence, our results cover all one-dimensional strong affine representations of $\mathcal{P}_2$.
Let us just point out one interesting fact, a kind of converse of Corollary~\ref{c:sim class length}, in the case $n=2$.
\begin{proposition}
    If $n=2$ and $\ell \geq 2$, then all words of length $\ell$ form a single equivalence class with respect to $\sim$ if and only if $2^\ell-1$ is a Mersenne prime.
\end{proposition}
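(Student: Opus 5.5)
For $n=2$ we have $\denom_0(w)=(2^\ell-1)/\gcd(2^\ell-1,\wval(\overleftarrow{w}))$ for a primitive word $w$ of length $\ell$. By Corollary~\ref{c:sim class length}, the words in one $\sim$ class all have the same length. So the condition ``all words of length $\ell$ form a single equivalence class'' means two things: every primitive word of length $\ell$ has the same value of $\denom_0$, and every word of length $\ell$ is primitive. (Here ``all words'' includes the non-primitive ones, and $\sim$ is defined only on $\mathcal{W}$.) I will prove both directions by analysing the possible values of $\wval(\overleftarrow{w})$. These values range over all integers $0\le a\le 2^\ell-1$, and $a$ is the value of a primitive word exactly when the repetend of $a/(2^\ell-1)$ has length $\ell$, by Example~\ref{ex:Q(0,n^l-1)}.

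For the direction ($\Leftarrow$), suppose $p=2^\ell-1$ is prime. Then $\ell$ is prime. A non-primitive word of length $\ell$ would be $v^k$ with $k\ge2$ and $k\mid \ell$, so $k=\ell$ and $v$ is a single letter, i.e. $w=0^\ell$ or $w=1^\ell$.

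Here I hit the main obstacle: these two words are not primitive when $\ell\ge2$, so ``all words'' cannot literally mean every element of $\Sigma^\ell$. I will read the statement as being about the primitive words of length $\ell$, that is, $\mathcal{W}\cap\Sigma^\ell$. This is the natural reading, since $\sim$ lives on $\mathcal{W}$.

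With this reading, a primitive $w$ of length $\ell$ has $a=\wval(\overleftarrow{w})\notin\{0,p\}$. Hence $\gcd(p,a)=1$ and $\denom_0(w)=p$ for every such $w$, so they all lie in one class. One could also get this from the ${\downarrow}$ description together with $\rWordh{0}{p}$, but the direct computation is simpler.

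For the direction ($\Rightarrow$), suppose $p=2^\ell-1$ is composite. I will produce two primitive words of length $\ell$ with different $\denom_0$.
\begin{enumerate}[(a)]
\item Take $a=1$, i.e. $\overleftarrow{w}=0^{\ell-1}1$. This word is primitive because it contains exactly one $1$ and $\ell\ge2$, and it gives $\denom_0=p$.
\item Take a proper divisor $1<e<p$ of $p$ and let $a=e$. Then $\denom_0=p/e<p$. The repetend of $e/p$ has length $\ord_{p/e}(2)$ by Proposition~\ref{p:ordhn is fraction rep length}.
\end{enumerate}
Choice (b) could give a word of shorter length that still has a smaller $\denom_0$, which is no use. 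To fix this, I will pick the second word so that its length is exactly $\ell$: I need a divisor $h$ of $p$ with $1<h<p$ and $\ord_h(2)=\ell$.

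If $\ell$ is prime, then every divisor $h>1$ of $p$ has $\ord_h(2)=\ell$, because the order divides the prime $\ell$ and is greater than $1$. Since $p$ is composite, it has a prime factor $h<p$. I then take the word corresponding to $1/h$, which has length $\ell$ and $\denom_0=h\ne p$.

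If $\ell$ is composite, say $\ell=km$ with $1<k<\ell$, I use the non-primitive-side argument instead. I take $\overleftarrow{w}=0^{\ell-1}1$ (with $\denom_0=p$) and compare it with a primitive word of length $\ell$ whose value is divisible by $2^k-1$. The words $w$ with $a=(2^k-1)c$, for suitable $c$ coprime to $p/(2^k-1)$ and not a multiple of a repunit pattern, should give $\denom_0=p/(2^k-1)$ while keeping length $\ell$. The simplest attempt is $a=2^k-1$, i.e. $\overleftarrow{w}=0^{\ell-k}1^k$, which is primitive since $0<k<\ell$, and its $\denom_0$ is $p/(2^k-1)<p$.

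This construction works for every composite $p$ with $\ell\ge2$, except that when $\ell$ is prime I instead use a prime factor $h$ as above. This completes the contrapositive.
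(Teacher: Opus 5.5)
Your proof is correct. The direction where $2^\ell-1$ is prime is the same as in the paper: every primitive $w$ of length $\ell$ has $\wval(\overleftarrow{w})\notin\{0,2^\ell-1\}$, so $\denom_0(w)=2^\ell-1$, and Corollary~\ref{c:sim class length} keeps words of other lengths out of the class. Your reading of ``all words'' as the primitive words of length $\ell$ is also the one the paper's proof uses.

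For the converse you take a different route.

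\textbf{The paper's route.} It uses one uniform construction. The smallest prime factor $p$ of the composite number $2^\ell-1$ satisfies $p\le\sqrt{2^\ell-1}$. So its binary expansion, padded to length $\ell$, begins with at least $\lfloor \ell/2\rfloor$ zeros. This forces primitivity, since a root $v$ with $|v|\le \ell/2$ would consist only of zeros. That word is then compared with $0\dots01$.

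\textbf{Your route.} You split according to whether $\ell$ is prime.
\begin{enumerate}
\item For composite $\ell=km$ you use the explicit word with $\overleftarrow{w}=0^{\ell-k}1^k$. Its value $2^k-1$ is a proper divisor of $2^\ell-1$. Your justification ``since $0<k<\ell$'' is terse; the one-line reason is that a proper power of a word beginning with $0$ and ending with $1$ contains the factor $10$.
\item For prime $\ell$ you take the reversed repetend of $1/h$ for a prime factor $h$. Its length is $\ord_h(2)=\ell$ by Proposition~\ref{p:ordhn is fraction rep length}.
\end{enumerate}

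\textbf{Comparison.} Your version avoids the $\sqrt{N}$ size estimate, and for composite $\ell$ it gives a witness without factoring $2^\ell-1$. The paper's version needs no case distinction.

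\textbf{Possible simplifications.} In the prime case you can skip the order argument. When $\ell$ is prime, the only non-primitive words of length $\ell$ are $0^\ell$ and $1^\ell$. So taking $\overleftarrow{w}$ to be the padded binary expansion of any proper divisor $1<h<2^\ell-1$ already gives a primitive word with $\denom_0(w)=(2^\ell-1)/h$. You should also delete the exploratory sentence about $a=(2^k-1)c$ ``should give'', since the concrete choice $a=2^k-1$ already settles the composite case.
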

\begin{proof}
    Assume first that $2^\ell-1$ is a Mersenne prime.
    Then we have $\gcd(2^\ell-1,\wval(\overleftarrow{w}))=1$ for all words $w \in \mathcal{W}$ of length $\ell$.
    Therefore, these words all have the same $\denom_0(w)$ value, hence they are all equivalent with respect to $\sim$.
    On the other hand, Corollary~\ref{c:sim class length} shows that there are no other words in this equivalence class.
    
    Now let us assume that $2^\ell-1$ is a composite number.
    Then the smallest prime divisor $p$ of $2^\ell-1$ is at most $\sqrt{2^\ell-1}$, thus it has at most $\lceil \ell/2 \rceil$ digits in base $2$.
    Padding by zeros on the left, we obtain a word $w$ of length $\ell$ with $\wval(w) = p$.
    Since $w$ starts with at least $\lfloor \ell/2 \rfloor$ zeros, it must be a primitive word.
    However, $w'=0\dots01 \in \{0,1\}^\ell$ is also primitive, but $w \nsim w'$, as $\denom_0(w) = (2^\ell-1)/p$ and $\denom_0(w') = 2^\ell-1$.
\end{proof}

The general characterization of the one-dimensional strong affine representations of the polycyclic monoids is still unsolved. 
However, since the case $n=2$ was easier to handle, it seems natural to look at higher dimensions with $\abs{D}=2$.
This means that we consider a representation on $\Z^\nu$ given by maps of the form $f_i(\mathbf{x})=N\mathbf x +\mathbf d_i\ (i=0,1)$, where $N \in \Z^{\nu \times \nu}$ is a matrix with $\abs{\det(N)}=2$.

Restricting ourselves to one dimension again, we are also interested in comparing the sets of words obtained from negative and positive values of $n$. 
If we fix $k\geq 2$, are there sets of words that we can obtain with both $n=-k$ and $n=k$? 
Are there sets of words that we can only obtain in one case but not the other? 
Can we obtain a closure system by considering sets of words from both the negative and the positive case?

We fixed the indices in $D=\{d_0,\dots, d_{\abs n-1}\}$ such that $d_0<\dots<d_{\abs n -1}$, and this obviously influences the sets of words describing the representations. Removing this restriction, we have more freedom, thus more sets of words can occur, and it would be interesting to characterize these sets of words.
Relaxing the problem even further, one could delete the labels from the graph induced by the representations, and determine the arising graphs up to isomorphism. This means that instead of the sets of words, we only consider the multisets of the lengths of the words.

\bmhead{Acknowledgements}

The research of the authors was partially supported by the National Research, Development and Innovation Office of Hungary under grants K138892 and ADVANCED 153383.

\section*{Declarations}


\begin{itemize}
\item \textbf{Competing interests}: The authors have no conflicts of interest to declare that are relevant to the content of this article.
\end{itemize}

\bibliography{ODSARPM}

\end{document}